\def\BibTeX{{\rm B\kern-.05em{\sc i\kern-.025em b}\kern-.08em
    T\kern-.1667em\lower.7ex\hbox{E}\kern-.125emX}}
\newtheorem{thm}{Theorem}[section]
\newtheorem{lem}[thm]{Lemma}
\newtheorem{prop}[thm]{Proposition}
\newtheorem{cond}[thm]{Condition}
\theoremstyle{definition}
\theoremstyle{remark}
\newtheorem{rem}{Remark}[section]
\numberwithin{equation}{section}
    \newcommand{\floor}[1]{\lfloor#1\rfloor}
    \newcommand{\eqd}{\stackrel{d}{=}}
\newcommand{\be}{\begin{equation}}
    \newcommand{\ee}{\end{equation}}
\begin{document}

\title[On the $J_{1}$ convergence for partial sum processes]{On the $J_{1}$ convergence for partial sum processes with a reduced number of jumps}


%
\author{Danijel Krizmani\'{c}}

\address{Danijel Krizmani\'{c}\\ Department of Mathematics\\
        University of Rijeka\\
        Radmile Matej\v{c}i\'{c} 2, 51000 Rijeka\\
        Croatia}
\email{dkrizmanic@math.uniri.hr}

\subjclass[2010]{Primary 60F17; Secondary 60G52, 60G55}
\keywords{Functional limit theorem, Partial sum process, Regular variation, Skorohod $J_{1}$ topology, L\'{e}vy process, Weak dependence, Mixing}


\begin{abstract}
Various functional limit theorems for partial sum processes of strictly stationary sequences of regularly varying random variables in the space of c\`{a}dl\`{a}g functions $D[0,1]$ with one of the Skorohod topologies have already been obtained. The mostly used Skorohod $J_{1}$ topology is inappropriate when clustering of large values of the partial sum processes occurs. When all extremes within each cluster of high-threshold excesses do not have the same sign, Skorohod $M_{1}$ topology also becomes inappropriate. In this paper we alter the definition of the partial sum process in order to shrink all extremes within each cluster to a single one, which allow us to obtain the functional $J_{1}$ convergence. We also show that this result can be applied to some standard time series models, including the GARCH(1,1) process and its squares, the stochastic volatility models and $m$--dependent sequences.
\end{abstract}

\maketitle

\section{Introduction}
\label{intro}

Let $(X_n)_{n\geq 1}$ be a strictly stationary sequence of real valued random variables and define by $S_n=X_1+\cdots +X_n,\ {n\geq 1}$, its accompanying sequence of partial
sums. If the sequence $(X_{n})$ is i.i.d.~then it is well known (see for example Gnedenko and Kolmogorov~\cite{GnKo54}, Rva\v{c}eva~\cite{Rv62}, Feller~\cite{Fe71}) that there exist real sequences $(a_n)$ and $(b_n)$ such that
\begin{equation}\label{e:CLT}
 \frac{S_{n}-b_{n}}{a_{n}} \xrightarrow{d} S \qquad \textrm{as} \ n \to \infty,
\end{equation}
for some non-degenerate $\alpha$--stable random variable $S$ with $\alpha \in (0,2)$ if and only if $X_{1}$ is regularly varying with index $\alpha \in (0,2)$, that is,
\begin{equation}\label{e:regvarosn2}
\mathrm{P} (|X_{1}| > x) = x^{-\alpha} L(x),
\end{equation}
where $L(\,\cdot\,)$ is a slowly varying function at $\infty$ and
\begin{equation}\label{e:regvarosn1}
 \frac{\mathrm{P}(X_{1}>x)}{\mathrm{P}(|X_{1}|>x)} \to p \qquad \textrm{and} \qquad \frac{\mathrm{P}(X_{1}< -x)}{\mathrm{P}(|X_{1}|>x)} \to q,
\end{equation}
as $x \to \infty$, with $p \in [0,1]$ and $q=1-p$. As $\alpha$ is less than 2, the variance of $X_{1}$ is infinite.

The functional generalization of (\ref{e:CLT}) has been studied extensively in probability literature. Define the partial sum processes
\begin{equation*}
V_{n}(t) =  \frac{1}{a_{n}} \sum_{k=1}^{\floor{nt}}(X_{k} - b_{n}), \qquad t \in [0,1],
\end{equation*}
where the sequences $(a_{n})$ and $(b_{n})$ are chosen as
\begin{equation*}
 n \mathrm{P}( |X_{1}| > a_{n}) \to 1 \qquad \textrm{and} \qquad b_{n} = \mathrm{E} \bigl( X_{1} \, 1_{\{ |X_{1}| \le a_{n} \}} \bigr).
\end{equation*}
Here $\floor{x}$ represents the integer part of the real number $x$. In functional limit theory one investigates the asymptotic behavior of the processes $V_{n}(\,\cdot\,)$ as $n \to \infty$. Since the sample paths of $V_{n}(\,\cdot\,)$ are elements of the space $D[0,1]$ of all right-continuous real valued functions on $[0,1]$ with left limits, it is natural to consider the weak convergence of distributions of $V_{n}(\,\cdot\,)$ with the one of Skorohod topologies on $D[0,1]$ introduced in Skorohod \cite{Sk56}.

Skorohod \cite{Sk57} established a functional limit theorem for the processes $V_{n}(\,\cdot\,)$ for infinite variance i.i.d.~regularly varying sequences $(X_{n})$. Under some weak dependence conditions, weak convergence of partial sum processes were obtained by Leadbetter and Rootz\'{e}n~\cite{LeRo88} and Tyran-Kami\'{n}ska~\cite{TK10}. Their functional limit theorems hold in Skorohod $J_{1}$ topology, which is appropriate when large values of the partial sum processes do not cluster. When clustering of large values occurs then $J_{1}$ convergence fails to hold, but the functional limit theorem might still hold in the weaker Skorohod $M_{1}$ topology. Avram and
Taqqu \cite{AvTa92} obtained a functional limit theorem with Skorohod $M_1$ topology for sums of moving
averages with nonnegative coefficients. Recently Basrak et al.~\cite{BKS} gave sufficient conditions for functional limit theorem with $M_{1}$ topology to hold for stationary,
regularly varying sequences for which all extremes within each cluster of high-threshold
excesses have the same sign.

In this paper we alter the definition of the partial sum process in the manner that all extremes within each cluster shrink to a single one, which allows us to recover the $J_{1}$ convergence. Note that the process $V_{n}(t)$ jumps at every $t=k/n$ ($k \leq n$), with $(X_{k}-b_{n})/a_{n}$ being the size of the jump. Now we reduce the number of jumps (or alternatively increase the intervals between jumps) by introducing a sequence of positive integers $(r_{n})$ such that $r_{n} \to \infty$ and $k_{n}:= \floor{n/r_{n}} \to \infty$ as $n \to \infty$, and defining new partial sum processes
\begin{equation*}
 W_{n}(t) = \frac{1}{a_{n}} \sum_{k=1}^{\lfloor k_{n}t \rfloor}
(S_{r_{n}}^{k} - c_{n}), \qquad t \in [0,1],
\end{equation*}
where $S_{r_{n}}^{k} = X_{(k-1)r_{n}+1} + \ldots + X_{kr_{n}}$ ($k,\,n \in \mathbb{N}$), and $c_{n}$ are centering constants which will be specified later. The process $W_{n}(t)$ jumps at every $t=k/k_{n}$, with $(S_{r_{n}}^{k}-c_{n})/a_{n}$ being the size of the jump. In other words, we break $X_{1}, X_{2}, \ldots$ into blocks of $r_{n}$ consecutive random variables and treat the sums of random variables within each block as we treated single random variables $X_{i}$ in the process $V_{n}(\,\cdot\,)$. One jump of the process $W_{n}(\,\cdot\,)$ corresponds to $r_{n}$ consecutive jumps of the process $V_{n}(\,\cdot\,)$. In this way we have partially smoothed the trajectories of partial sum processes such that each cluster can consist of only one excess.

Functional limit theorems for the processes $V_{n}(\,\cdot\,)$ base on the regular variation property of $X_{1}$. Therefore in obtaining the functional limit theorem for the processes $W_{n}(\,\cdot\,)$ we need to impose a similar condition on $S_{r_{n}}$. For this purpose we will assume $S_{r_{n}}$ satisfies a certain large deviation condition (see relation (\ref{e:reg-var-new0}) in the sequel).

The paper is organized as follows. In Section~\ref{s:prel} we introduce some basic results on point processes and regular variation. We also describe precisely the large deviation condition that we impose on $S_{r_{n}}$. In Section~\ref{s:glav} we state and prove the functional limit theorem for the processes $W_{n}(\,\cdot\,)$ in the $J_{1}$ topology. Here we also discuss several examples of stationary sequences covered by our theorem. Finally, in Section~\ref{s:kraj} (Appendix) we prove that the mixing conditions used in our main theorem are implied by some conditions which are given in terms of the standardly used $\alpha$--mixing and $\rho$--mixing conditions.

\section{Preliminaries}\label{s:prel}
 At the beginning we introduce here some basic notions and results on point processes which will be used later on. For more background on the theory of point processes we refer to Kallenberg~\cite{Ka83}. Let $\mathbb{E} = \overline{\mathbb{R}} \setminus \{0\}$, where $\overline{\mathbb{R}}=[-\infty, \infty]$. For $x,y \in \mathbb{E}$ define
 \begin{equation}\label{e:metricrho}
    \rho (x,y) = \max \Big\{ \Big| \frac{1}{|x|} -\frac{1}{|y|} \Big|, |\textrm{sign}\,x - \textrm{sign}\,y| \Big\},
 \end{equation}
where $\textrm{sign}\,z = z/|z|$. With the metric $\rho$, $\mathbb{E}$ becomes a locally compact, complete and separable matric space. A set $B \subseteq \mathbb{E}$ is relatively compact if it is bounded away from origin, that is, if there exists $u>0$ such that $B \subseteq \mathbb{E} \setminus [-u,u]$. Denote by $\mathcal{B}(\mathbb{E})$ the $\sigma$--algebra generated by $\rho$--open sets. Let $M_{+}(\mathbb{E})$ be the class of all Radon measures on $\mathbb{E}$, i.e. all nonnegative measures that are finite on relatively compact subsets of $\mathbb{E}$. A useful topology for $M_{+}(\mathbb{E})$ is the vague topology which renders $M_{+}(\mathbb{E})$ a complete separable metric space. If $\mu_{n} \in M_{+}(\mathbb{E})$, $n \geq 0$, then $\mu_{n}$ converges vaguely to $\mu_{0}$ (written $\mu_{n} \xrightarrow{v} \mu_{0}$) if $\int f\,d \mu_{n} \to \int f\,d \mu_{0}$ for all $f \in C_{K}^{+}(\mathbb{E})$, where $C_{K}^{+}(\mathbb{E})$ denotes the class of all nonnegative continuous real functions on $\mathbb{E}$ with compact support. One metric that induces the vague topology is given by
\begin{equation}\label{e:vaguemetric}
 d_{v}(\mu_{1}, \mu_{2}) = \sum_{k=1}^{\infty} 2^{-k} \bigg( \bigg| \int_{\mathbb{E}} f_{k}(x)\,\mu_{1}(dx) - \int_{\mathbb{E}} f_{k}\,\mu_{2}(dx) \bigg| \wedge 1 \bigg), \qquad \mu_{1}, \mu_{2} \in M_{+}(\mathbb{E}),
\end{equation}
for some sequence of functions $f_{k} \in C_{K}^{+}(\mathbb{E})$, where $a \wedge b = \min \{a,b\}$. We call $d_{v}$ the vague metric.

A Radon point measure is an element of $M_{+}(\mathbb{E})$ of the form $m = \sum_{i}\delta_{x_{i}}$, where $\delta_{x}$ is the Dirac measure. Denote by $M_{p}(\mathbb{E})$ the class of all Radon point measures. Since $M_{p}(\mathbb{E})$ is a subset of $M_{+}(\mathbb{E})$, we endow it with the relative topology. Let $\mathcal{M}_{p}(\mathbb{E})$ be the Borel $\sigma$--field of subsets of $M_{p}(\mathbb{E})$ generated by open sets. A point process on $\mathbb{E}$ is a measurable map from a given probability space to the measurable space $(M_{p}(\mathbb{E}), \mathcal{M}_{p}(\mathbb{E}))$. A standard example of point process is the Poisson process. Suppose $\mu$ is a given Radon measure on $\mathbb{E}$. Then $N$ is a Poisson process with mean (intensity) measure $\mu$, or synonymously, a Poisson random measure ($\mathrm{PRM}(\mu)$), if for all $A \in \mathcal{B}(\mathbb{E})$:
\begin{equation*}
 \mathrm{P}(N(A)=k) = \left\{
                          \begin{array}{cc}
                            \exp (-\mu(A)) (\mu(A))^{k}/k! & \textrm{if} \ \mu(A) < \infty \\[0.2em]
                            0 & \textrm{if} \ \mu(A) = \infty \\
                          \end{array}
                        \right.
 \end{equation*}
 and if $A_{1}, \ldots, A_{k} \in \mathcal{B}(\mathbb{E})$ are mutually disjoint, then $N(A_{1}), \ldots, N(A_{k})$ are independent random variables.

 A sequence of point processes $(N_{n})$ on $\mathbb{E}$ converges in distribution to a point process $N$ on $\mathbb{E}$ (written $N_{n} \xrightarrow{d} N$) if $\mathrm{E} f(N_{n}) \to \mathrm{E} f(N)$ for every bounded continuous function $f \colon M_{p}(\mathbb{E}) \to \mathbb{R}$. The point processes convergence is characterized by convergence of Laplace functionals. Denote by $\mathcal{B}_{+}$ the set of bounded measurable functions $f \colon \mathbb{E} \to [0,\infty)$. For a point process $N$ on $\mathbb{E}$ the Laplace functional of $N$ is the nonnegative function on $\mathcal{B}_{+}$ given by
 \begin{equation*}
  \Psi_{N}(f) = \mathrm{E} e^{-N(f)}, \qquad f \in \mathcal{B}_{+},
 \end{equation*}
 where $N(f) =  \int_{\mathbb{E}} f(x) N(dx)$.
 Then it holds that given point processes $N_{n}$, $n \geq 0$,
 \begin{equation}\label{e:ppLf}
    N_{n} \xrightarrow{d} N_{0} \qquad \textrm{iff} \qquad \Psi_{N_{n}}(f) \to \Psi_{N_{0}}(f) \quad \textrm{for all} \ f \in C_{K}^{+}(\mathbb{E})
 \end{equation}
  (see Kallenberg~\cite{Ka83}, Theorem 4.2).

Let $(X_{n})$ be a strictly stationary sequence of regularly varying
random variables with index $\alpha \in (0,2)$, and
let $(a_{n})$ be
a sequence of positive real numbers such that $n
\mathrm{P}(|X_{1}|>a_{n}) \to 1$ as $n \to \infty$. Regular variation can be expressed in terms of vague convergence of measures on $\mathbb{E}$:
\begin{equation*}
 n  \mathrm{P}
    ( a_{n}^{-1} X_{1} \in \cdot\,) \xrightarrow{v} \mu(\,\cdot\,) \qquad \textrm{as} \ n \to \infty,
\end{equation*}
    the Radon measure $\mu$ on $\mathbb{E}$ being given by
\begin{equation*}
 \mu(dx) = \big( p\alpha x^{-\alpha -1}1_{(0,\infty)}(x) +
   q\alpha (-x)^{-\alpha -1}1_{(-\infty,0)}(x) \big)dx,
\end{equation*}
where $p$ and $q$ are as in (\ref{e:regvarosn1}).

Using standard regular variation arguments it can be shown that for every $\lambda >0$ it holds that
$ a_{\lfloor \lambda n \rfloor}/a_{n} \to \lambda^{1/\alpha}$ as $n \to \infty$.
Therefore $a_{n}$ can be represented as
$a_{n} = n^{1/\alpha} L'(n)$,
where $L'(\,\cdot\,)$ is a slowly varying function at $\infty$.

Through the whole paper we will assume the sequence $(X_{n})$ satisfies the following large deviation type relations:
\begin{equation}\label{e:large-dev}
 \begin{array}{rl}
    k_{n} \mathrm{P}(S_{r_{n}} > xa_{n}) & \to  \ c_{+}x^{-\alpha},\\[0.6em]
  k_{n} \mathrm{P}(S_{r_{n}} < -xa_{n})  & \to  \ c_{-}x^{-\alpha},
  \end{array} \qquad x>0,
\end{equation}
as $n \to \infty$, where $c_{+}, c_{-} \geq 0$ are some constants, $(r_{n})$ is a sequence of positive
integers such that $r_{n} \to \infty$ and $r_{n}/n \to 0$ as $n \to
\infty$, and $k_{n} = \lfloor n/r_{n} \rfloor$. Some sufficient
conditions for relations in (\ref{e:large-dev}) to hold are given in
Bartkiewicz et al. \cite{BaJaMiWi09} and Davis and Hsing
\cite{DaHs95}.
 It is easy to see (for example by Lemma 6.1 in Resnick
\cite{Re07})  that (\ref{e:large-dev}) is equivalent to
\begin{equation}\label{e:reg-var-new0}
    k_{n} \mathrm{P}
    ( a_{n}^{-1} S_{r_{n}} \in \cdot\,) \xrightarrow{v} \nu(\,\cdot\,) \qquad \textrm{as} \ n \to \infty,
\end{equation}
where $\nu$ is the measure
\begin{equation}\label{e:new-measure}
  \nu(dx) = \big( c_{+}\alpha x^{-\alpha -1}1_{(0,\infty)}(x) +
   c_{-}\alpha (-x)^{-\alpha -1}1_{(-\infty,0)}(x) \big)dx.
 \end{equation}

\smallskip

\begin{lem}\label{l:auxlemma1}
   Let $\alpha \in (0,1)$ and assume relation (\ref{e:reg-var-new0})
   holds. Then for any $u >0$,
   \begin{equation}\label{e:eqlimit11}
      \lim_{n \to \infty} k_{n} \mathrm{E} \bigg( \frac{|S_{r_{n}}|}{a_{n}}
      1 _{ \big\{ \frac{|S_{r_{n}}|}{a_{n}} \leq u \big\} } \bigg)
      = \int_{|x| \leq u} |x|\,\nu (dx).
   \end{equation}
\end{lem}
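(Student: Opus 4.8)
The plan is to reformulate (\ref{e:eqlimit11}) as convergence of integrals against the measures $\mu_{n}:=k_{n}\mathrm{P}(a_{n}^{-1}S_{r_{n}}\in\cdot\,)$, which by (\ref{e:reg-var-new0}) converge vaguely to $\nu$. Writing $h(x)=|x|\,1_{\{|x|\le u\}}$ and $Y_{n}=|S_{r_{n}}|/a_{n}$, the left-hand side equals $\int_{\mathbb{E}}h\,d\mu_{n}=k_{n}\mathrm{E}(Y_{n}1_{\{Y_{n}\le u\}})$ and the right-hand side equals $\int_{\mathbb{E}}h\,d\nu=(c_{+}+c_{-})\frac{\alpha}{1-\alpha}u^{1-\alpha}$, which is finite precisely because $\alpha\in(0,1)$. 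The obstacle is that $h$ does not vanish near $0$, so its support $[-u,u]$ is not relatively compact in $\mathbb{E}$ and (\ref{e:reg-var-new0}) — which only controls mass bounded away from the origin — cannot be applied to $h$ directly; mass of $\mu_{n}$ may a priori escape toward $0$ and carry a nonnegligible amount of the weight $|x|$.

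First I would record the layer-cake identity
\[ k_{n}\mathrm{E}\big(Y_{n}1_{\{Y_{n}\le u\}}\big)=\int_{0}^{u}k_{n}\mathrm{P}(Y_{n}>t)\,dt-u\,k_{n}\mathrm{P}(Y_{n}>u). \]
Since $\mathrm{P}(Y_{n}>t)=\mathrm{P}(S_{r_{n}}>ta_{n})+\mathrm{P}(S_{r_{n}}<-ta_{n})$, relation (\ref{e:large-dev}) gives the pointwise limit $k_{n}\mathrm{P}(Y_{n}>t)\to(c_{+}+c_{-})t^{-\alpha}$ for every fixed $t>0$, and in particular $u\,k_{n}\mathrm{P}(Y_{n}>u)\to(c_{+}+c_{-})u^{1-\alpha}$. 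Applying Fatou's lemma to the integral term (and using that the boundary term converges) together with the elementary identity $\int_{0}^{u}(c_{+}+c_{-})t^{-\alpha}\,dt-(c_{+}+c_{-})u^{1-\alpha}=\int_{|x|\le u}|x|\,\nu(dx)$ already yields the lower bound $\liminf_{n}k_{n}\mathrm{E}(Y_{n}1_{\{Y_{n}\le u\}})\ge\int_{|x|\le u}|x|\,\nu(dx)$.

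For the matching upper bound I would split the integral at a small $\delta\in(0,u)$. On $[\delta,u]$ the functions $t\mapsto k_{n}\mathrm{P}(Y_{n}>t)$ are non-increasing and converge pointwise to the continuous non-increasing limit $(c_{+}+c_{-})t^{-\alpha}$, so by a P\'{o}lya/Dini argument the convergence is uniform on $[\delta,u]$ and $\int_{\delta}^{u}k_{n}\mathrm{P}(Y_{n}>t)\,dt\to\int_{\delta}^{u}(c_{+}+c_{-})t^{-\alpha}\,dt$, which tends to $\int_{0}^{u}(c_{+}+c_{-})t^{-\alpha}\,dt$ as $\delta\to0$. The main obstacle is the remaining near-origin piece $\int_{0}^{\delta}k_{n}\mathrm{P}(Y_{n}>t)\,dt$: its integrand blows up as $t\to0$ and corresponds to the moderate-deviation regime of the block sum $S_{r_{n}}$, which is not governed by the pointwise tail relation (\ref{e:large-dev}). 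To control it I would establish a uniform bound $k_{n}\mathrm{P}(Y_{n}>t)\le C\,t^{-\alpha}$, valid for all $t\in(0,u]$ and all large $n$, drawing on the Fuk--Nagaev/large-deviation estimates underlying (\ref{e:large-dev}) (cf. Bartkiewicz et al.~\cite{BaJaMiWi09}, Davis and Hsing~\cite{DaHs95}). Granting this, $\int_{0}^{\delta}k_{n}\mathrm{P}(Y_{n}>t)\,dt\le\frac{C}{1-\alpha}\delta^{1-\alpha}$ for all large $n$ (finite since $\alpha<1$), so $\lim_{\delta\to0}\limsup_{n}\int_{0}^{\delta}k_{n}\mathrm{P}(Y_{n}>t)\,dt=0$. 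Combining the two pieces with the boundary-term limit gives $\limsup_{n}k_{n}\mathrm{E}(Y_{n}1_{\{Y_{n}\le u\}})\le\int_{|x|\le u}|x|\,\nu(dx)$, which together with the Fatou lower bound proves (\ref{e:eqlimit11}). I expect the uniform near-origin tail bound to be the crux, since vague convergence alone is genuinely insufficient to rule out weight escaping to the origin.
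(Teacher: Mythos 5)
Your layer-cake reformulation, the Fatou lower bound, and the Dini/P\'{o}lya treatment of $\int_{\delta}^{u}$ are all sound, and you have correctly isolated the real difficulty: the near-origin piece $\int_{0}^{\delta}k_{n}\mathrm{P}(Y_{n}>t)\,dt$, which vague convergence cannot control. But that is exactly where the proof has a genuine gap. The uniform bound $k_{n}\mathrm{P}(Y_{n}>t)\le C\,t^{-\alpha}$ for all $t\in(0,u]$ and all large $n$ is a moderate-deviation statement (the threshold $ta_{n}$ with $t\to 0$ sits well below the scale $a_{n}$), and it does \emph{not} follow from the hypotheses of the lemma: relation (\ref{e:large-dev}) is only a pointwise limit at each fixed $x>0$, and the lemma assumes nothing beyond (\ref{e:reg-var-new0}) and the standing marginal regular variation of $X_{1}$. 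Invoking ``Fuk--Nagaev estimates underlying (\ref{e:large-dev})'' imports structure (independence-like or mixing assumptions) that is not available here, so as written the crux is asserted rather than proved.

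The gap can be closed with the tools actually at hand, and this is what the paper does. Note that $\int_{0}^{\delta}k_{n}\mathrm{P}(Y_{n}>t)\,dt=k_{n}\mathrm{E}[Y_{n}\wedge\delta]=k_{n}\mathrm{E}[Y_{n}1_{\{Y_{n}\le\delta\}}]+\delta\,k_{n}\mathrm{P}(Y_{n}>\delta)$, and the second summand tends to $(c_{+}+c_{-})\delta^{1-\alpha}$ by (\ref{e:large-dev}). For the first summand the paper splits on the event $\bigcap_{j=1}^{r_{n}}\{|X_{j}|\le\delta a_{n}\}$ versus its complement: on the intersection one bounds $|S_{r_{n}}|\le\sum_{j=1}^{r_{n}}|X_{j}|$ with each term truncated at $\delta a_{n}$, and on the complement one bounds $Y_{n}1_{\{Y_{n}\le\delta\}}\le\delta$ and applies a union bound. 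Stationarity, $k_{n}r_{n}/n\to 1$, $n\mathrm{P}(|X_{1}|>\delta a_{n})\to\delta^{-\alpha}$, and Karamata's theorem then give $\limsup_{n}k_{n}\mathrm{E}[Y_{n}1_{\{Y_{n}\le\delta\}}]\le\delta^{1-\alpha}\bigl(\tfrac{\alpha}{1-\alpha}+1\bigr)$, which vanishes as $\delta\downarrow 0$ precisely because $\alpha\in(0,1)$. Substituting this estimate for your claimed uniform tail bound makes your argument complete; apart from that, your route (one-dimensional tail-function analysis) and the paper's (truncating the integrand $|x|$ away from the origin and using vague convergence on the annulus) are equivalent ways of organizing the same two ingredients.
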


\begin{proof} Fix $u >0$.
 Define
 \begin{equation*}
 \nu_{n}(\,\cdot\,) = k_{n} \mathrm{P} ( a_{n}^{-1} S_{r_{n}} \in \cdot\,), \qquad n \in \mathbb{N},
 \end{equation*}
 and
 \begin{equation*}
  f_{\delta}(x) = |x| 1_{\overline{B}(\delta,\,u)}(x), \qquad x \in \mathbb{E},\,\delta \in (0, u),
 \end{equation*}
 where
 $B(\delta,\,u) = \{ x \in \mathbb{E} : \delta < |x| < u \}$
 (and $\overline{B}(\delta,\,u) = \{ x \in \mathbb{E} : \delta \leq |x| \leq u \}$).
 By relation (\ref{e:reg-var-new0}) we have
 $\nu_{n} \xrightarrow{v} \nu$ as $n \to \infty$, and this yields
 \begin{equation}\label{e:konv1}
    \int_{\mathbb{E}} f_{\delta}(x)\,\nu_{n}(dx) \rightarrow \int_{\mathbb{E}}
    f_{\delta}(x)\,\nu(dx),
 \end{equation}
 as $n \rightarrow \infty$ (see Kallenberg~\cite{Ka83}, 15.7.3).
 Define
 \begin{equation*}
  f(x) = |x| 1_{\overline{B}(u)}(x), \qquad x \in \mathbb{E},
 \end{equation*}
 where $B(r) = \{ x \in \mathbb{E} : |x| < r \}$.
 For any $\delta \in (0,\,u)$ it holds
 \begin{eqnarray}\label{e:konv2}
   \nonumber \bigg| \int_{\mathbb{E}} f(x)\,\nu_{n}(dx) - \int_{\mathbb{E}} f(x)\,\nu(dx) \bigg|  & \le & \bigg|
      \int_{B(\delta)} f(x)\,\nu_{n}(dx) - \int_{B(\delta)} f(x)\,\nu(dx) \bigg| \\[0.5em]
   \nonumber & \hspace*{-24em} &  \hspace*{-12em} \displaystyle + \ \bigg|
      \int_{B(\delta)^{c}} f(x)\,\nu_{n}(dx) - \int_{B(\delta)^{c}} f(x)\,\nu(dx) \bigg|\\[0.5em]
    \nonumber & \hspace*{-24em} \le &  \hspace*{-12em} \displaystyle \bigg|
      \int_{B(\delta)} f(x)\,\nu_{n}(dx) \bigg| + \bigg| \int_{B(\delta)} f(x)\,\nu(dx) \bigg|\\[0.5em]
    & \hspace*{-24em} &  \hspace*{-12em} \displaystyle + \ \bigg| \int_{\overline{B}(\delta,\,u)}
      f(x)\,\nu_{n}(dx) - \int_{\overline{B}(\delta,\,u)} f(x)\,\nu(dx) \bigg|.
 \end{eqnarray}
 For the first term on the right
hand side of (\ref{e:konv2}) we have
\begin{eqnarray}
   \nonumber  \bigg| \int_{B(\delta)} f(x)\,\nu_{n}(dx) \bigg| & = &
       \int_{\mathbb{E}} |x| 1_{B(\delta)}(x)\,\nu_{n}(dx) = k_{n} \int \bigg| \frac{S_{r_{n}}}{a_{n}} \bigg|
       1_{\{ |S_{r_{n}}| < \delta a_{n} \}}\,d \mathrm{P} \\[0.6em]
   \nonumber & \hspace*{-14em} = &  \hspace*{-7em} \displaystyle k_{n} \mathrm{E} \bigg[ \frac{|S_{r_{n}}|}{a_{n}}
       1_{\{ |S_{r_{n}}| < \delta a_{n} \}} \bigg] = k_{n} \mathrm{E} \bigg[ \frac{|S_{r_{n}}|}{a_{n}}
       1_{\{ |S_{r_{n}}| < \delta a_{n} \}} 1_{\{ \cap_{j=1}^{r_{n}} \{ |X_{j}| \leq
       \delta a_{n} \} \}} \bigg]  \\[0.6em]
   \nonumber & \hspace*{-14em} &  \hspace*{-7em} \displaystyle + \
       k_{n} \mathrm{E}
       \bigg[ \frac{|S_{r_{n}}|}{a_{n}}
       1_{\{ |S_{r_{n}}| < \delta a_{n} \}} 1_{\{ \cup_{j=1}^{r_{n}}
       \{ |X_{j}| > \delta a_{n} \} \}} \bigg].
  \end{eqnarray}
  This term is bounded above by
  \begin{eqnarray}\label{e:konv3}
    \nonumber & \leq &   \displaystyle k_{n} \mathrm{E}
       \bigg[ \frac{\sum_{j=1}^{r_{n}}|X_{j}|}{a_{n}}
       1_{\{ \cap_{j=1}^{r_{n}} \{ |X_{j}| \leq \delta a_{n} \} \}}
       \bigg] + k_{n} \delta \mathrm{P} \bigg( \bigcup_{j=1}^{r_{n}} \{ |X_{j}| > \delta a_{n} \}
       \bigg)\\[0.3em]
   \nonumber &  \leq  &  \displaystyle k_{n} \sum_{j=1}^{r_{n}} \mathrm{E}
       \bigg[ \frac{|X_{j}|}{a_{n}} 1_{\{ |X_{j}| \leq \delta a_{n} \}}
       \bigg] + k_{n} \delta \sum_{j=1}^{r_{n}} \mathrm{P} (|X_{j}| > \delta a_{n} )\\[0.3em]
   \nonumber &  =  &\displaystyle k_{n} r_{n} \mathrm{E}
       \bigg[ \frac{|X_{1}|}{a_{n}} 1_{\{ |X_{1}| \leq \delta a_{n} \}}
       \bigg] + k_{n} r_{n} \delta  \mathrm{P} ( |X_{1}| > \delta a_{n} )\\[0.3em]
   &  = & \displaystyle \delta \cdot \frac{k_{n} r_{n}}{n}
       \cdot n \mathrm{P} (|X_{1}| > \delta a_{n}) \cdot \bigg[ \frac{\mathrm{E}
      [|X_{1}| 1_{\{ |X_{1}| \leq \delta a_{n} \}}]}{\delta a_{n} \mathrm{P}(|X_{1}| > \delta a_{n})} +  1 \bigg].
\end{eqnarray}
 From the definition of the sequences $(r_{n})$ and $(k_{n})$ it follows
$ k_{n}r_{n}/n \to 1$ as $n \to \infty$.
Since $X_{1}$ is a regularly varying random variable with index
$\alpha$, it follows immediately
$ n \mathrm{P}(|X_{1}| > \delta a_{n} ) \to \delta^{- \alpha}$ as
   $n \to \infty$.
By Karamata's theorem it holds that
\begin{equation*}
 \lim_{n \rightarrow \infty} \frac{\mathrm{E}
    [|X_{1}| 1_{\{ |X_{1}| \leq \delta a_{n} \}}]}{\delta a_{n} \mathrm{P}(|X_{1}| > \delta a_{n})}
    = \frac{\alpha}{1 - \alpha}.
\end{equation*}
Now from (\ref{e:konv3}) we get
\begin{equation*}
  \limsup_{n \to \infty} \bigg| \int_{B(\delta)} f(x)\,\nu_{n}(dx) \bigg|
  \le \delta^{1- \alpha} \bigg( \frac{\alpha}{1- \alpha} +1 \bigg),
\end{equation*}
and therefore, since $\alpha \in (0,1)$,
\begin{equation}\label{e:konv4}
    \lim_{\delta \to 0} \limsup_{n \to \infty} \bigg|
    \int_{B(\delta)} f(x)\,\nu_{n}(dx) \bigg|  = 0.
\end{equation}
By the representation of the measure $\nu$ in (\ref{e:new-measure})
we get
\begin{equation*}
 \int_{|x| < \delta} |x|\,\nu(dx) = (c_{-} + c_{+}) \frac{\alpha}{1-\alpha}\,\delta^{1-\alpha}.
\end{equation*}
 Hence for the second term on the right hand side of (\ref{e:konv2}) we have
\begin{equation}\label{e:konv5}
     \bigg| \int_{B(\delta)} f(x)\,\nu(dx) \bigg| =
     \int_{|x|< \delta } |x| \,\nu(dx) \rightarrow 0 \qquad \textrm{as} \ \delta \rightarrow 0.
\end{equation}
From (\ref{e:konv1}) we get for the third term on the right hand
side of (\ref{e:konv2})
\begin{equation}\label{e:konv6}
   \bigg| \int_{\overline{B}(\delta,\,u)} f(x)\,\nu_{n}(dx) -
      \int_{\overline{B}(\delta,\,u)} f(x)\,\nu(dx) \bigg|
       =  \bigg|
      \int_{\mathbb{E}} f_{\delta}(x)\,\nu_{n}(dx) - \int_{\mathbb{E}} f_{\delta}(x)\,\nu(dx) \bigg| \rightarrow 0
\end{equation}
as $n \to \infty$.
Now from (\ref{e:konv2}) using (\ref{e:konv4}), (\ref{e:konv5}) and
(\ref{e:konv6}) we obtain
\begin{equation*}
 \lim _{\delta \to 0}\limsup_{n \to \infty} \bigg| \int_{\mathbb{E}} f(x)\,\nu_{n}(dx) - \int_{\mathbb{E}}
   f(x)\,\nu(dx) \bigg|=0.
\end{equation*}
From this immediately follows
\begin{equation*}
 \int_{\mathbb{E}} f(x)\,\nu_{n}(dx) \to
   \int_{\mathbb{E}} f(x)\,\nu(dx) \qquad \textrm{as} \ n \rightarrow \infty,
\end{equation*}
i.e.
\begin{equation*}
  k_{n} \mathrm{E} \bigg( \frac{|S_{r_{n}}|}{a_{n}} 1_{ \big\{ \frac{|S_{r_{n}}|}{a_{n}} \leq
  u
  \big\} } \bigg) \to \int_{|x| \leq u}|x|\,\nu(dx)
  \qquad \textrm{as} \ n \to \infty.
\end{equation*}
 \end{proof}

\section{Main theorem}\label{s:glav}

Let $(X_n)$ be a strictly stationary sequence of regularly varying random variables with index $\alpha \in (0, 2)$. Assume (\ref{e:large-dev}) holds. The theorem below gives conditions
under which a stochastic sum process constructed from the sequence $(S_{r_{n}}^{k})$ satisfies a nonstandard
functional limit theorem in the space $D[0,1]$ of real-valued c\`{a}dl\`{a}g functions equipped with the Skorohod $J_{1}$ topology, with a non-Gaussian $\alpha$--stable
L\'{e}vy process as a limit. Recall that the distribution of a
L\'{e}vy process $W(\,\cdot\,)$ is characterized by its
characteristic triplet, i.e.\ the characteristic triplet of the
infinitely divisible distribution of $W(1)$. The characteristic
function of $W(1)$ and the characteristic triplet $(a, \mu, b)$ are
related in the following way:
\begin{equation*}
 \mathrm{E} [e^{izW(1)}] = \exp \biggl( -\frac{1}{2}az^{2} + ibz + \int_{\mathbb{R}} \bigl( e^{izx}-1-izx 1_{[-1,1]}(x) \bigr)\,\mu(dx) \biggr)
\end{equation*}
for $z \in \mathbb{R}$; here $a \ge 0$, $b \in \mathbb{R}$ are constants, and $\mu$ is a measure on $\mathbb{R}$ satisfying
\begin{equation*}
 \mu ( \{0\})=0 \qquad \text{and} \qquad \int_{\mathbb{R}}(|x|^{2} \wedge 1)\,\mu(dx) < \infty,
\end{equation*}
that is, $\mu$ is a L\'{e}vy measure. For a textbook treatment of
L\'{e}vy processes we refer to Bertoin~\cite{Bertoin96} and
Sato~\cite{Sa99}.

The metric $d_{J_{1}}$ that generates the $J_{1}$ topology on $D[0,1]$ is defined in the following way. Let $\Delta$ be the set of strictly increasing continuous functions
$\lambda \colon [0,1] \to [0,1]$ such that $\lambda(0)=0$ and
$\lambda(1)=1$, and let $e \in \Delta$ be the identity map on
$[0,1]$, i.e. $e(t)=t$ for all $t \in [0,1]$. For $x,y \in D[0,1]$
define
\begin{equation*}
    d_{J_{1}}(x,y) = \inf \{ \|x \circ \lambda - y \|_{[0,1]} \vee
    \| \lambda - e \|_{[0,1]} : \lambda \in \Delta \},
\end{equation*}
where $\|x\|_{[0,1]}=\sup \{ |x(t)| : t \in [0,1] \}$ and $a \vee b = \max \{a,b\}$.
Then $d_{J_{1}}$ is a metric on $D[0,1]$ and is called the Skorohod $J_{1}$
metric.

 The mixing condition
 appropriate for the result in this section is similar to the condition $\mathcal{A}(a_{n})$ of Davis and Hsing~\cite{DaHs95}, and hence we denote it by $\mathcal{A}^{*}(a_{n})$ and say that a strictly stationary sequence of random
variables $(X_{n})$
 satisfies the mixing condition $\mathcal{A}^{*}(a_{n})$ if there exist a sequence of positive
integers $(r_{n})$ such that $r_{n} \to \infty $ and $r_{n} / n \to
0$ as $n \to \infty$, and such that for every $f \in
C_{K}^{+}(\mathbb{E})$ (denoting $k_{n} = \lfloor n / r_{n}
\rfloor$)
\begin{equation}\label{e:mixcon-new}
   \mathrm{E} \exp \bigg( - \sum_{k=1}^{k_{n}}
              f(a_{n}^{-1}S_{r_{n}}^{k}) \bigg) - \bigg( \mathrm{E} \exp (-f(a_{n}^{-1}S_{r_{n}}))
              \bigg)^{k_{n}}   \to 0 \qquad \textrm{as} \ n \to \infty.
 \end{equation}
In case $\alpha \in [1,2)$, we will need to assume that the contribution of the smaller increments of the partial sum process is close to its expectation.

\begin{cond}\label{c:step6cond-new}
There exists a sequence of positive integers $(r_{n})$ with $r_{n}
\to \infty$ and $k_{n} = \lfloor n/r_{n} \rfloor \to \infty$ as $n
\to \infty$, such that for all $\delta
>0$,
\begin{equation*}
 \lim_{u \downarrow 0} \limsup_{n \to \infty}
              \mathrm{P} \bigg[ \max_{1 \leq j \leq k_{n}} \bigg| \sum_{k=1}^{j}
              \bigg( \frac{S_{r_{n}}^{k}}{a_{n}} 1_{ \big\{ \frac{|S_{r_{n}}^{k}|}{a_{n}}
              \leq u  \big\}} - \mathrm{E} \bigg( \frac{S_{r_{n}}^{k}}{a_{n}}
              1_{ \big\{ \frac{|S_{r_{n}}^{k}|}{a_{n}} \leq u \big\} } \bigg) \bigg) \bigg| > \delta
              \bigg]=0.
\end{equation*}
\end{cond}


In Appendix we discuss some sufficient conditions for
the mixing condition $\mathcal{A}^{*}(a_{n})$ and Condition
\ref{c:step6cond-new} to hold.

\smallskip

\begin{thm}\label{t:newprocess}
  Let $(X_{n})$ be a strictly stationary sequence of regularly varying random variables
  with index $\alpha \in (0,2)$, and let $(a_{n})$ be a sequence of
positive real numbers such that $n \mathrm{P}(|X_{1}|>a_{n}) \to 1$
as $n \to \infty$. Suppose there exists a sequence of positive
integers $(r_{n})$ such that, as $n \to \infty$, $r_{n} \to \infty$,
$k_{n} = \lfloor n/r_{n} \rfloor \to \infty$ and
\begin{equation}\label{e:reg-var-new}
     k_{n} \mathrm{P}
\bigg( \frac{S_{r_{n}}}{a_{n}} \in \cdot
             \bigg) \xrightarrow{v} \nu(\,\cdot\,).
\end{equation}
Suppose that the mixing condition $\mathcal{A}^{*}(a_{n})$ and Condition \ref{c:step6cond-new} if $\alpha \in [1,2)$ hold with the same sequence $(r_{n})$ as in (\ref{e:reg-var-new}).
Then for a stochastic process defined by
\begin{equation*}
 W_{n}(t) = \sum_{k=1}^{\lfloor k_{n}t \rfloor}
\frac{S_{r_{n}}^{k}}{a_{n}} - \lfloor k_{n}t \rfloor \mathrm{E}
\bigg( \frac{S_{r_{n}}}{a_{n}} 1_{ \big\{ \frac{|S_{r_{n}}|}{a_{n}}
\le 1 \big\} }
    \bigg), \quad t \in [0,1],
\end{equation*}
it holds that
\begin{equation*}
 W_{n} \xrightarrow{d} W_{0}, \qquad n \to \infty,
\end{equation*}
in $D[0,1]$ endowed with the $J_{1}$ topology, where
$W_{0}(\,\cdot\,)$ is an $\alpha$--stable L\'{e}vy process with
characteristic triplet $(0,\nu,0)$.
\end{thm}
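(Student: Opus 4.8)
The plan is to run the classical point-process argument for infinite-variance stable functional limits (as in Davis and Hsing \cite{DaHs95} and Resnick \cite{Re07}), but with the block sums $S_{r_{n}}^{k}$ playing the role that the individual variables $X_{i}$ play in the i.i.d.\ case. The first step is to prove the point process convergence
\[
 N_{n} := \sum_{k=1}^{k_{n}} \delta_{(k/k_{n},\, a_{n}^{-1}S_{r_{n}}^{k})}
 \xrightarrow{d} N \qquad \text{in } M_{p}([0,1]\times\mathbb{E}),
\]
where $N$ is a Poisson random measure with intensity $\mathrm{Leb}\times\nu$. By (\ref{e:ppLf}) it suffices to prove convergence of Laplace functionals. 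A function $g\in C_{K}^{+}([0,1]\times\mathbb{E})$ is supported away from $0$ in the space variable, so (\ref{e:reg-var-new}) gives, after a standard discretization of the time variable and application of the estimate on each time block,
\[
 k_{n}\,\mathrm{E}\bigl[1-e^{-g(\,\cdot\,,\,a_{n}^{-1}S_{r_{n}})}\bigr] \longrightarrow \int (1-e^{-g})\,d(\mathrm{Leb}\times\nu);
\]
the mixing condition $\mathcal{A}^{*}(a_{n})$ then replaces the true Laplace functional by the $k_{n}$-th power of the single-block one, so that $\Psi_{N_{n}}(g)\to \exp(-\int(1-e^{-g})\,d(\mathrm{Leb}\times\nu))$, which is the Laplace functional of the stated PRM.

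Next, for fixed $\varepsilon>0$ I would pass to the truncated process
\[
 W_{n}^{(\varepsilon)}(t) = \sum_{k=1}^{\lfloor k_{n}t\rfloor} \frac{S_{r_{n}}^{k}}{a_{n}}\,1_{\{|S_{r_{n}}^{k}|/a_{n}>\varepsilon\}} - \lfloor k_{n}t\rfloor\,\mathrm{E}\Bigl(\tfrac{S_{r_{n}}}{a_{n}} 1_{\{\varepsilon<|S_{r_{n}}|/a_{n}\le 1\}}\Bigr),
\]
which is a measurable image of $N_{n}$ under the summation map sending a configuration $\sum_{i}\delta_{(t_{i},x_{i})}$ to $t\mapsto\sum_{t_{i}\le t}x_{i}1_{\{|x_{i}|>\varepsilon\}}$, minus the deterministic compensator. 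This map is continuous in the $J_{1}$ topology at every configuration whose time coordinates are pairwise distinct, and the PRM limit $N$ enjoys this property almost surely because $\mathrm{Leb}$ has no atoms. The continuous mapping theorem then yields $W_{n}^{(\varepsilon)}\xrightarrow{d}W_{0}^{(\varepsilon)}$, the compensated Lévy process built from the atoms of $N$ of modulus exceeding $\varepsilon$. It is here that the blocking is essential: each excess of $|S_{r_{n}}^{k}|$ produces a single jump of $W_{n}$ at the distinct time $k/k_{n}$, so that in the limit no two big jumps collide---exactly the feature that fails for $V_{n}$ and forces the weaker $M_{1}$ topology.

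I would then show that $\sup_{t}|W_{n}(t)-W_{n}^{(\varepsilon)}(t)|\to 0$ in probability, first as $n\to\infty$ and then $\varepsilon\to 0$. For $\alpha\in(0,1)$ the small increments are absolutely summable: bounding the remainder by $\sum_{k=1}^{k_{n}}(|S_{r_{n}}^{k}|/a_{n})\,1_{\{|S_{r_{n}}^{k}|/a_{n}\le\varepsilon\}}$ together with its mean and taking expectations, Lemma~\ref{l:auxlemma1} produces a bound converging to $\int_{|x|\le\varepsilon}|x|\,\nu(dx)$, which tends to $0$ with $\varepsilon$, and a maximal/Markov inequality finishes it. For $\alpha\in[1,2)$ absolute summability fails and one must exploit the cancellation built into the centering; this is precisely the content of Condition~\ref{c:step6cond-new}, which directly asserts that the centered small-increment partial-sum maximum is negligible. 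On the limit side, $W_{0}^{(\varepsilon)}\to W_{0}$ in $(D[0,1],J_{1})$ as $\varepsilon\to 0$ by the standard small-jumps estimate for the $\alpha$-stable Lévy process with triplet $(0,\nu,0)$. Combining these three facts through a standard approximation argument for weak convergence (see, e.g., Resnick~\cite{Re07}) yields $W_{n}\xrightarrow{d}W_{0}$ in the $J_{1}$ topology.

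I expect the delicate points to be twofold. First, the $J_{1}$-continuity of the summation functional, together with the verification that the limiting configuration has almost surely distinct time marks, must be established carefully, since the distinction from the $M_{1}$ setting rests entirely on this non-collision of jumps. Second, the regime $\alpha\in[1,2)$ is the genuinely hard part: the truncation level $1$ in the definition of $W_{n}$, the compensator $\lfloor k_{n}t\rfloor\,\mathrm{E}(\cdot)$, and Condition~\ref{c:step6cond-new} must be matched so that the compensated small jumps vanish uniformly while the limit retains exactly the characteristic triplet $(0,\nu,0)$ with compensation at level $1$ as in (\ref{e:new-measure}).
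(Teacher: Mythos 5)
Your strategy is the same as the paper's: point process convergence of $\sum_{k}\delta_{(k/k_{n},\,a_{n}^{-1}S_{r_{n}}^{k})}$ to $\mathrm{PRM}(\mathrm{Leb}\times\nu)$, the summation functional and its $J_{1}$-continuity to get $W_{n}^{(\varepsilon)}\xrightarrow{d}W_{0}^{(\varepsilon)}$, the L\'{e}vy--It\^{o} approximation $W_{0}^{(\varepsilon)}\to W_{0}$, negligibility of the compensated small jumps via Condition~\ref{c:step6cond-new} for $\alpha\in[1,2)$ and via a Markov bound plus Lemma~\ref{l:auxlemma1} for $\alpha\in(0,1)$, and the standard converging-together theorem. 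The one step of your sketch that does not go through as written is the very first one. The mixing condition $\mathcal{A}^{*}(a_{n})$ only controls Laplace functionals of the form $\mathrm{E}\exp\bigl(-\sum_{k}f(a_{n}^{-1}S_{r_{n}}^{k})\bigr)$ with the \emph{same} $f\in C_{K}^{+}(\mathbb{E})$ applied to every block. For a test function $g\in C_{K}^{+}([0,1]\times\mathbb{E})$ the summand $g(k/k_{n},\cdot)$ varies with $k$, so you cannot invoke $\mathcal{A}^{*}(a_{n})$ to replace $\Psi_{N_{n}}(g)$ by a product of single-block Laplace transforms; your proposed ``discretization of the time variable'' would require a strengthened mixing hypothesis (asymptotic factorization over disjoint groups of blocks carrying different functions), which is not assumed. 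The paper circumvents this: it first proves the space-only convergence $\sum_{k}\delta_{a_{n}^{-1}S_{r_{n}}^{k}}\xrightarrow{d}\mathrm{PRM}(\nu)$, where $\mathcal{A}^{*}(a_{n})$ applies verbatim against the i.i.d.\ comparison array $(Z_{n,k})$ handled by Theorem 5.3 of \cite{Re07}, and then attaches the time coordinates by marking with auxiliary i.i.d.\ uniforms, exchanging $U_{k}$ for the order statistics $U_{k:k_{n}}$, and showing the replacement of $U_{k:k_{n}}$ by $k/k_{n}$ is negligible in the vague metric (Lemma 4.3 of \cite{Re86} and Step 3 of Theorem 6.3 in \cite{Re07}). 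With that repair your argument coincides with the paper's; the remaining steps, including the role you assign to the distinctness of the time marks in securing $J_{1}$ rather than $M_{1}$, are exactly as in the paper.
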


\begin{proof}
 Let, for any $n \in \mathbb{N}$,
 $(Z_{n,k})_{k}$ be a sequence of i.i.d.~random
 variables such that
 $Z_{n,1} \eqd S_{r_{n}}$.
 By relation (\ref{e:reg-var-new}) we have
\begin{equation}\label{e:distribS3}
   k_{n} \mathrm{P} \bigg(  \frac{Z_{n,1}}{a_{n}} \in \cdot
   \bigg) \xrightarrow{v} \nu (\,\cdot\,) \qquad
   \textrm{as} \ n \to \infty.
\end{equation}
Theorem 5.3 in Resnick~\cite{Re07} then implies, as $n \to \infty$,
\begin{equation}\label{e:lapfunct-new}
   \widetilde{\xi}_{n} := \sum_{k=1}^{k_{n}}
   \delta_{a_{n}^{-1}Z_{n,k}} \xrightarrow{d}
   \mathrm{PRM} (\nu)
   \end{equation}
 on $\mathbb{E}$. Define the point process
 $  \xi_{n} = \sum_{k=1}^{k_{n}} \delta_{a_{n}^{-1}S_{r_{n}}^{k}}$.
 For any $f \in C_{K}^{+}(\mathbb{E})$ we have
\begin{eqnarray*}
   \Psi_{\xi_{n}}(f) - \Psi_{\widetilde{\xi}_{n}}(f) & = &  \mathrm{E} \exp \bigg( - \sum_{k=1}^{k_{n}}
   f(a_{n}^{-1}S_{r_{n}}^{k}) \bigg) - \bigg( \mathrm{E} \exp (-f(a_{n}^{-1}Z_{1,\,n}))
   \bigg)^{k_{n}}\\[0.3em]
   & = &  \mathrm{E} \exp \bigg( - \sum_{k=1}^{k_{n}}
   f(a_{n}^{-1}S_{r_{n}}^{k}) \bigg) - \bigg( \mathrm{E} \exp (-f(a_{n}^{-1}S_{r_{n}}))
   \bigg)^{k_{n}}.
\end{eqnarray*}
Hence, the mixing condition $\mathcal{A}^{*}(a_{n})$ implies
$\Psi_{\xi_{n}}(f) - \Psi_{\widetilde{\xi}_{n}}(f) \to 0$ as $n \to
\infty$. Then by relations (\ref{e:ppLf}) and (\ref{e:lapfunct-new})  we obtain, as $n \to \infty$,
\begin{equation}\label{e:dodatak1}
 \sum_{k=1}^{k_{n}} \delta_{a_{n}^{-1}S_{r_{n}}^{k}} \xrightarrow{d}
    \mathrm{PRM}(\nu).
\end{equation}

 Suppose $U_{1}, \ldots, U_{k_{n}}$ are i.i.d. random variables uniformly distributed on $[0,1]$ with order statistics $U_{1:k_{n}} \leq U_{2:k_{n}} \leq \ldots \leq U_{k_{n}:k_{n}}$, which are independent of $(S_{r_{n}}^{k})$. From (\ref{e:dodatak1}) using Lemma 4.3 of Resnick~\cite{Re86} we obtain, as $n \to \infty$,
\begin{equation*}
\sum_{k=1}^{k_{n}} \delta_{(U_{k},\,a_{n}^{-1}S_{r_{n}}^{k})} \xrightarrow{d} \mathrm{PRM} (\mathbb{LEB} \times \nu).
\end{equation*}
From the independence of $(U_{k})$ and $(S_{r_{n}}^{k})$, we have that
\begin{equation*}
\sum_{k=1}^{k_{n}} \delta_{(U_{k:k_{n}},\,a_{n}^{-1}S_{r_{n}}^{k})} \eqd \sum_{k=1}^{k_{n}} \delta_{(U_{k},\,a_{n}^{-1}S_{r_{n}}^{k})}
\end{equation*}
as random elements of $M_{+}([0,1] \times \mathbb{E})$. Therefore
\begin{equation}\label{e:dodatak2}
\sum_{k=1}^{k_{n}} \delta_{(U_{k:k_{n}},\,a_{n}^{-1}S_{r_{n}}^{k})} \xrightarrow{d} \mathrm{PRM} (\mathbb{LEB} \times \nu).
\end{equation}
Using the arguments from Step 3 in the proof of Theorem 6.3 in Resnick~\cite{Re07} we get
\begin{equation}\label{e:dodatak3}
d_{v} \Big(  \sum_{k=1}^{k_{n}}
  \delta_{(k/ k_{n},\,a_{n}^{-1}S_{r_{n}}^{k})},\,\sum_{k=1}^{k_{n}} \delta_{(U_{k:k_{n}},\,a_{n}^{-1}S_{r_{n}}^{k})} \Big) \xrightarrow{P} 0 \qquad \textrm{as} \ n \to \infty,
\end{equation}
where $d_{v}$ is the vague metric on $M_{+}([0,1] \times \mathbb{E})$ (cf.~(\ref{e:vaguemetric})).
From (\ref{e:dodatak2}) and (\ref{e:dodatak3}) using Slutsky's theorem (see for instance Theorem 3.4 in Resnick~\cite{Re07}) we obtain, as $n \to \infty$,
\begin{equation*}\label{e:uvjetmix2-new}
  \sum_{k=1}^{k_{n}}
  \delta_{(k/ k_{n},\,a_{n}^{-1}S_{r_{n}}^{k})}
  \xrightarrow{d} \mathrm{PRM} (\mathbb{LEB} \times \nu) = \sum_{k} \delta_{(t_{k},j_{k})}
\end{equation*}
on $[0,1] \times \mathbb{E}$.
From this using the same arguments as in the proof of Theorem 7.1 in Resnick~\cite{Re07}, we obtain that, as $n \to \infty$,
\begin{equation}\label{e:slut11}
     W_{n}^{(u)} \xrightarrow{d} W_{0}^{(u)}
\end{equation}
in $D[0,1]$ with the $J_{1}$ topology, where
\begin{equation*}
  W_{n}^{(u)}(\,\cdot\,) := \sum_{k=1}^{\lfloor k_{n} \cdot \rfloor} \frac{S_{r_{n}}^{k}}{a_{n}}
     1_{\big\{ \frac{|S_{r_{n}}^{k}|}{a_{n}} > u
     \big\}} -  \lfloor k_{n} \,\cdot \, \rfloor
    \mathrm{E} \bigg( \frac{S_{r_{n}}}{a_{n}}
    1_{ \big\{ u < \frac{|S_{r_{n}}|}{a_{n}} \leq 1 \bigr\} }
    \biggr),
\end{equation*}
and
\begin{equation*}
  W_{0}^{(u)}(\,\cdot\,) := \sum_{t_{k} \leq \cdot}j_{k} 1_{ \{|j_{k}| > u \}} - (\,\cdot\,) \int_{u < |x| \leq 1}x\,\nu(dx).
\end{equation*}
From the L\'{e}vy-It\^{o}
representation of a L\'{e}vy process (see Section 5.5.3 in Resnick
\cite{Re07} or Theorem 19.2 in
Sato \cite{Sa99}), there exists a L\'{e}vy process
$W_{0}(\,\cdot\,)$ with  characteristic triplet $(0,\nu,0)$ such
that
\begin{equation*}
 \sup_{t \in [0,1]} |W_{0}^{(u)}(t) - W_{0}(t)| \xrightarrow{} 0
\end{equation*}
almost surely as $u \downarrow 0$. Since uniform convergence implies Skorohod $J_{1}$ convergence, we
get
$ d_{J_{1}}(W_{0}^{(u)}, W_{0}) \to 0$
almost surely as $u \downarrow 0$. Therefore since almost sure
convergence implies convergence in distribution,
\begin{equation}\label{e:slut12}
    W_{0}^{(u)} \xrightarrow{d} W_{0} \qquad
    \textrm{as} \ u \to 0,
\end{equation}
in $D[0,1]$ with the $J_{1}$ topology.

If we show that
\begin{equation*}
 \lim_{u \downarrow 0} \limsup_{n \to \infty}
   \mathrm{P}[d_{J_{1}}(W_{n}^{(u)}, W_{n}) > \delta]=0
\end{equation*}
 for any $\delta>0$, then from (\ref{e:slut11}), (\ref{e:slut12}) and Theorem 3.5 in Resnick \cite{Re07} we
 will have, as $n \to \infty$,
\begin{equation*}
  W_{n} \xrightarrow{d} W_{0}
\end{equation*}
 in $D[0,1]$ with the $J_{1}$ topology. Since the $J_{1}$
 metric on $D[0,1]$ is bounded above by the uniform metric on
 $D[0,1]$, it suffices to show that
 \begin{equation*}\label{e:step6-new}
    \lim_{u \downarrow 0} \limsup_{n \to \infty} \mathrm{P} \biggl(
 \sup_{t \in [0,1]} |W_{n}^{(u)}(t) - W_{n}(t)| >
 \delta \biggr)=0.
 \end{equation*}
 We have
 \begin{equation}\label{e:step6-new1}
 \mathrm{P} \biggl(
 \sup_{t \in [0,1]} |W_{n}^{(u)}(t) - W_{n}(t)| >
 \delta \biggr) = \mathrm{P} \bigg[ \max_{1 \leq j \leq k_{n}} \bigg| \sum_{k=1}^{j}
              \bigg( \frac{S_{r_{n}}^{k}}{a_{n}} 1_{ \big\{ \frac{|S_{r_{n}}^{k}|}{a_{n}}
              \leq u  \big\}} - \mathrm{E} \bigg( \frac{S_{r_{n}}^{k}}{a_{n}}
              1_{ \big\{ \frac{|S_{r_{n}}^{k}|}{a_{n}} \leq u \big\} } \bigg) \bigg) \bigg| > \delta
              \bigg].
 \end{equation}
 For $\alpha \in [1,2)$ this relation is simply Condition
 \ref{c:step6cond-new}. Therefore it remains to show
 (\ref{e:step6-new1}) for the case when $\alpha \in (0,1)$. Hence
assume $\alpha \in (0,1)$. For arbitrary (and fixed)
 $\delta >0$ define
\begin{equation*}
 I(u,n) =
    \mathrm{P} \bigg[ \max_{1 \leq j \leq k_{n}} \bigg| \sum_{k=1}^{j}
    \bigg( \frac{S_{r_{n}}^{k}}{a_{n}} 1_{ \big\{ \frac{|S_{r_{n}}^{k}|}{a_{n}} \leq u  \big\}}
    - \mathrm{E} \bigg( \frac{S_{r_{n}}^{k}}{a_{n}}
    1_{ \big\{ \frac{|S_{r_{n}}^{k}|}{a_{n}} \leq u \big\} } \bigg) \bigg) \bigg| > \delta
    \bigg].
\end{equation*}
Using stationarity and Chebyshev's inequality we get the bound
\begin{eqnarray*}
  I(u,n) & \leq & \mathrm{P} \bigg[ \max_{1 \leq j \leq k_{n}} \sum_{k=1}^{j}
              \bigg| \frac{S_{r_{n}}^{k}}{a_{n}}
              1_{ \big\{ \frac{|S_{r_{n}}^{k}|}{a_{n}} \leq u  \big\}}
              - \mathrm{E} \bigg( \frac{S_{r_{n}}^{k}}{a_{n}}
              1_{ \big\{ \frac{|S_{r_{n}}^{k}|}{a_{n}} \leq u \big\} } \bigg) \bigg| > \delta\bigg]\\[0.4em]
  & = & \mathrm{P} \bigg[ \sum_{k=1}^{ k_{n}} \bigg|
             \frac{S_{r_{n}}^{k}}{a_{n}}
            1_{ \big\{ \frac{|S_{r_{n}}^{k}|}{a_{n}} \leq u  \big\} }
            - \mathrm{E} \bigg( \frac{S_{r_{n}}^{k}}{a_{n}} 1_{ \big\{ \frac{|S_{r_{n}}^{k}|}{a_{n}}
            \leq u \big\} } \bigg) \bigg| > \delta
            \bigg]\\[0.4em]
   & \leq & \delta^{-1} \mathrm{E} \bigg[ \sum_{k=1}^{k_{n}} \bigg|
            \frac{S_{r_{n}}^{k}}{a_{n}} 1_{ \big\{ \frac{|S_{r_{n}}^{k}|}{a_{n}} \leq u  \big\} }
            - \mathrm{E} \bigg( \frac{S_{r_{n}}^{k}}{a_{n}} 1_{ \big\{ \frac{|S_{r_{n}}^{k}|}{a_{n}}
            \leq u \big\} } \bigg) \bigg| \bigg]\\[0.4em]
   & \leq & 2 \delta^{-1} \sum_{k=1}^{k_{n}}
             \mathrm{E} \bigg( \frac{|S_{r_{n}}^{k}|}{a_{n}} 1_{ \big\{ \frac{|S_{r_{n}}^{k}|}{a_{n}}
            \leq u \big\} } \bigg)\\[0.4em]
   & = &  2 \delta^{-1} k_{n} \mathrm{E} \bigg( \frac{|S_{r_{n}}|}{a_{n}} 1_{ \big\{ \frac{|S_{r_{n}}|}{a_{n}}
            \leq u \big\} } \bigg).
 \end{eqnarray*}
  Using Lemma \ref{l:auxlemma1} we obtain that
  \begin{eqnarray*}
    \lim_{n \to \infty} k_{n} \mathrm{E} \bigg( \frac{|S_{r_{n}}|}{a_{n}}
   1 _{ \big\{ \frac{|S_{r_{n}}|}{a_{n}} \leq u \big\} } \bigg) &=&
    \int_{|x| \leq u} |x|\,\nu(dx) \\[0.5em]
     & = & (c_{-} + c_{+}) \frac{\alpha}{1-\alpha}
     u^{1-\alpha}\\[0.5em]
     & \to & 0 \qquad \textrm{as} \ u \to 0.
  \end{eqnarray*}
 Hence
\begin{equation*}
  \lim_{u \downarrow 0} \limsup_{n \to \infty} I(u, n)=0,
\end{equation*}
 which completes the proof, with the note that the $\alpha$--stability of the
 process $W_{0}(\,\cdot\,)$ follows from Theorem 14.3 in Sato \cite{Sa99} and the representation of the measure $\nu$ in
 (\ref{e:new-measure}).
\end{proof}

\smallskip

\begin{rem}
Theorem~\ref{t:newprocess} covers a wide range of stationary sequences. In Bartkiewicz et al.~\cite{BaJaMiWi09} are given some sufficient conditions for
relation (\ref{e:reg-var-new}) to hold (see their Theorem 1 and Section 3.2.2) and several examples of standard time series models that satisfy these conditions, including $m$--dependent sequences, GARCH(1,1) process and its squares, solutions to stochastic recurrence equations and stochastic volatility models. These conditions are:
\begin{itemize}
  \item[(C1)] The process $(X_{n})$ is regularly varying with index
  $\alpha \in (0,2)$, i.e. for every $d \geq 1$, the $d$--dimensional random vector $\mathbf{X} = (X_{1}, \ldots, X_{d})$ is multivariate regularly varying with index $\alpha$. This means that for some (and then for every) norm $\|\,\cdot\,\|$ on $\mathbb{R}^{d}$ there exists a random vector $\mathbf{\Theta}$  on the unit sphere $\mathbb{S}^{d-1} = \{ \mathbf{x} \in \mathbb{R}^{d} : \| \mathbf{x} \|=1 \}$ such that for every $u>0$ and as $x \to \infty$,
   \begin{equation*}
    \frac{\mathrm{P}(\|\mathbf{X}\| > ux, \mathbf{X} / \|\mathbf{X}\| \in \cdot\,)}{\mathrm{P}(\|\mathbf{X}\| >x)} \xrightarrow{w} u^{-\alpha} \mathrm{P}(\mathbf{\Theta} \in \cdot\,),
   \end{equation*}
   where the arrow "$\xrightarrow{w}$" denotes weak convergence of finite measures.
  \item[(C2)] There exists a sequence of positive integers $(r_{n})$ such that $r_{n} \to \infty$, $k_{n}= \lfloor n / r_{n} \rfloor \to \infty$ and for every $x \in \mathbb{R}$,
       \begin{equation*}
         \big| \varphi_{n}(x) - (\varphi_{n r_{n}}(x))^{k_{n}} \big|
        \to 0
       \end{equation*}
        as $n \to \infty$, where $\varphi_{nj}(x)= \mathrm{E}e^{ix a_{n}^{-1} S_{j}}$,
        $j=1,2, \ldots$, and $\varphi_{n}(x)=\varphi_{nn}(x)$.
  \item[(C3)] For every $x \in \mathbb{R}$,
       \begin{equation*}
         \lim_{d \to \infty} \limsup_{n \to \infty}
        \frac{n}{r_{n}} \sum_{j=d+1}^{r_{n}} \mathrm{E} \big| \overline{x
        a_{n}^{-1}(S_{j}-S_{d})} \cdot \overline{x a_{n}^{-1} X_{1}} \big|
        = 0,
       \end{equation*}
        where the sequence $(r_{n})_{n}$ is the same as in (C2) and for an arbitrary random variable $Z$ we put
        $\overline{Z} = (Z \wedge 2) \vee (-2)$.
  \item[(C4)] The limits
       \begin{equation*}
         \lim_{n \to \infty} n \mathrm{P}(S_{d}>a_{n})=b_{+}(d)
        \quad \textrm{and} \quad  \lim_{n \to \infty} n \mathrm{P}(S_{d} \leq
        -a_{n})=b_{-}(d), \qquad d \geq 1,
       \end{equation*}
       \begin{equation*}
         \lim_{d \to \infty}(b_{+}(d) - b_{+}(d-1))=c_{+} \quad
        \textrm{and} \quad \lim_{d \to \infty}(b_{-}(d) -
        b_{-}(d-1))=c_{-}
       \end{equation*}
        exists.
  \item[(C5)] For $\alpha >1$ assume $\mathrm{E} X_{1}=0$ and for $\alpha
        =1$,
       \begin{equation*}
         \lim_{d \to \infty} \limsup_{n \to \infty} n\,\big|
        \mathrm{E}(\sin (a_{n}^{-1}S_{d})) \big|=0.
       \end{equation*}
\end{itemize}

With appropriate (and standard) assumptions, which are precisely described in \cite{BaJaMiWi09}, the above mentioned time series models are strongly mixing with geometric rate, which suffices the mixing condition $\mathcal{A}^{*}(a_{n})$ to hold (see Proposition~\ref{p:strmix111} below). Therefore, for $\alpha \in (0,1)$, all conditions of Theorem~\ref{t:newprocess} are satisfied and the conclusion of the theorem follows. Naturally, for $\alpha \in [1,2)$ one has also to verify Condition \ref{c:step6cond-new}.
\end{rem}

\smallskip

\section{Appendix}\label{s:kraj}

In this section we give some sufficient conditions for
the mixing condition $\mathcal{A}^{*}(a_{n})$ and Condition
\ref{c:step6cond-new} to hold. These conditions are principally based on the well known strong or $\alpha$--mixing and $\rho$--mixing conditions. Let  $(\Omega,
\mathcal{F}, \mathrm{P})$ be a probability space. For any
$\sigma$-field $\mathcal{A} \subset \mathcal{F}$, let
$L_{2}(\mathcal{A})$ denote the space of square-integrable,
$\mathcal{A}$-measurable, real-valued random variables. For any two
$\sigma$-fields $\mathcal{A}, \mathcal{B} \subseteq \mathcal{F}$
define
\begin{equation*}
  \alpha (\mathcal{A}, \mathcal{B}) =
          \sup \{ | \mathrm{P}(A \cap B) - \mathrm{P}(A) \mathrm{P}(B) | : A \in \mathcal{A}, B \in \mathcal{B}
          \}
\end{equation*}
          and
\begin{equation*}
 \rho (\mathcal{A}, \mathcal{B}) =
        \sup \Big\{ \frac{|\mathrm{E}(XY) - \mathrm{E}X \mathrm{E}Y|}{\sqrt{\mathrm{E}X^{2} \mathrm{E}Y^{2}}} : X \in L_{2}(\mathcal{A}), Y
        \in L_{2}(\mathcal{B}) \Big\}.
\end{equation*}
Let now $(X_{n})_{n \in \mathbb{Z}}$ be a sequence of random
variables on $(\Omega, \mathcal{F}, \mathrm{P})$, and denote $\mathcal{F}_{k}^{l} = \sigma (\{ X_{i} : k \leq i
\leq l \})$ for $-\infty \leq k \leq l \leq \infty$. Then we say the sequence $(X_{n})_{n}$ is $\alpha$--mixing (or strongly mixing) if
\begin{equation*}
 \alpha(n) = \sup_{j \in \mathbb{Z}} \alpha(\mathcal{F}_{-\infty}^{j},
        \mathcal{F}_{j+n}^{\infty}) \to 0
\end{equation*}
        and $\rho$--mixing if
\begin{equation*}
 \rho(n) = \sup_{j \in \mathbb{Z}} \rho(\mathcal{F}_{-\infty}^{j},
        \mathcal{F}_{j+n}^{\infty}) \to 0
\end{equation*}
as $n \to \infty$. Note that when the sequence $(X_{n})$ is strictly stationary, one
has simply $\alpha(n) = \alpha (\mathcal{F}_{-\infty}^{0},
\mathcal{F}_{n}^{\infty})$, and similar for $\rho(n)$.

\begin{prop}\label{p:strmix111}
Suppose $(X_{n})$ is a strictly stationary sequence of regularly
varying random variables with index $\alpha \in (0,2)$, and
$(a_{n})$ a sequence of positive real numbers such that $n
\mathrm{P}(|X_{1}|>a_{n}) \to 1$ as $n \to \infty$. Assume relation
(\ref{e:reg-var-new}) holds for some sequence of positive integers
$(r_{n})$ such that $r_{n} \to \infty$ and $r_{n}/ n \to 0$ as $n \to \infty$, and $k_{n}= \lfloor n/r_{n}
\rfloor = o(n^{t})$ for some
$0<t<1$. If the sequence $(X_{n})$ is strongly mixing with
exponential rate, i.e. $\alpha_{n} \leq C \rho^{n}$ for some
$\rho \in (0,1)$ and $C>0$, where $(\alpha_{n})$ is the sequence of $\alpha$--mixing coefficients of $(X_{n})$, then the mixing condition
$\mathcal{A}^{*}(a_{n})$ holds.
\end{prop}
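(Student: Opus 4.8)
The plan is to verify the convergence (\ref{e:mixcon-new}) for an arbitrary fixed $f \in C_{K}^{+}(\mathbb{E})$. Writing $g_{k} = \exp(-f(a_{n}^{-1}S_{r_{n}}^{k}))$, each $g_{k}$ takes values in $(0,1]$ and is measurable with respect to $\mathcal{F}_{(k-1)r_{n}+1}^{kr_{n}}$, and by stationarity the left-hand side of (\ref{e:mixcon-new}) equals $\mathrm{E}\,\bigl(\prod_{k=1}^{k_{n}} g_{k}\bigr) - \prod_{k=1}^{k_{n}} \mathrm{E}\,g_{k}$. The obstacle is that these blocks are contiguous, so a naive telescoping would produce the non-vanishing factor $\alpha(1)$; I therefore first separate the blocks by a small gap and afterwards show that the gap is asymptotically free of charge.

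First I would fix a sequence of positive integers $(l_{n})$ with $l_{n} \to \infty$, $l_{n}/r_{n} \to 0$ and $k_{n}\,\alpha(l_{n}) \to 0$. This is exactly where the two hypotheses enter: from $\alpha(m) \le C\rho^{m}$ and $k_{n} = o(n^{t})$ one may take $l_{n} = \lceil \beta \log n \rceil$ with $\beta$ large enough that $k_{n}\,\alpha(l_{n}) \le C k_{n} n^{-(1+t)} \to 0$, while $k_{n} = o(n^{t})$ forces $r_{n}$ to grow faster than $n^{1-t}$, so that $l_{n}/r_{n} \to 0$. Setting $\widetilde{S}_{k} = \sum_{i=(k-1)r_{n}+1}^{kr_{n}-l_{n}} X_{i}$ and $h_{k} = \exp(-f(a_{n}^{-1}\widetilde{S}_{k}))$, the variables $h_{k} \in (0,1]$ live on $\sigma$-fields separated by gaps of length $l_{n}$. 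Applying the classical covariance inequality for $\alpha$-mixing sequences inside the telescoping identity $\mathrm{E}\,\prod_{k} h_{k} - \prod_{k} \mathrm{E}\,h_{k} = \sum_{j=2}^{k_{n}} \bigl(\prod_{k>j} \mathrm{E}\,h_{k}\bigr)\,\mathrm{Cov}\bigl(\prod_{k<j} h_{k},\,h_{j}\bigr)$ yields $\bigl| \mathrm{E}\,\prod_{k} h_{k} - \prod_{k} \mathrm{E}\,h_{k} \bigr| \le 4 k_{n}\,\alpha(l_{n}) \to 0$, and $\prod_{k} \mathrm{E}\,h_{k} = \bigl(\mathrm{E}\exp(-f(a_{n}^{-1}S_{r_{n}-l_{n}}))\bigr)^{k_{n}}$ by stationarity.

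It remains to undo the replacement $S_{r_{n}}^{k} \mapsto \widetilde{S}_{k}$ in both $\mathrm{E}\,\prod_{k}(\cdot)$ and the product of expectations. Using $|e^{-a}-e^{-b}| \le |a-b|$ for $a,b \ge 0$ and $|a^{k_{n}}-b^{k_{n}}| \le k_{n}|a-b|$ for $a,b \in [0,1]$, both errors are dominated by the single quantity $k_{n}\,\mathrm{E}\bigl| f(a_{n}^{-1}S_{r_{n}}) - f(a_{n}^{-1}S_{r_{n}-l_{n}}) \bigr|$, which I must show tends to $0$; call this $(\star)$. Splitting on the size of the discarded partial sum $R = S_{r_{n}} - S_{r_{n}-l_{n}}$, on the event $\{|R| \le \eta a_{n}\}$ the two arguments of $f$ differ by at most $\eta$ and both stay outside the support of $f$ unless one of the two sums already exceeds a fixed multiple $c$ of $a_{n}$, so uniform continuity of $f$ bounds this part by a modulus $\omega_{f}(\eta)$ times $k_{n}\,\mathrm{P}(|S_{r_{n}}| > c\,a_{n})$ (and its $S_{r_{n}-l_{n}}$ analogue), which stay bounded by (\ref{e:reg-var-new}); on the complementary event the integrand is at most $2\|f\|_{\infty}$ and a single-big-jump estimate gives $k_{n}\,\mathrm{P}(|S_{l_{n}}| > \eta a_{n}) \le C(k_{n}l_{n}/n)\,\eta^{-\alpha} \to 0$, since $k_{n}l_{n}/n = (l_{n}/r_{n})(k_{n}r_{n}/n) \to 0$. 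Letting $n \to \infty$ and then $\eta \downarrow 0$ establishes $(\star)$, and combining it with the factorization of the previous step gives (\ref{e:mixcon-new}).

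The main obstacle is precisely $(\star)$, where the two requirements on $l_{n}$ pull in opposite directions: decoupling by mixing wants $l_{n}$ large so that $k_{n}\alpha(l_{n})\to0$, whereas cheap removal of the small blocks wants $l_{n}$ small so that $k_{n}l_{n}/n\to0$, and it is the combination of exponential mixing with $k_{n}=o(n^{t})$ that makes a logarithmic $l_{n}$ meet both demands simultaneously. The one technical point deserving care is the single-big-jump bound $\mathrm{P}(|S_{l_{n}}|>\eta a_{n}) \le C l_{n}\,\mathrm{P}(|X_{1}|>\eta a_{n}/2)$, which has to be extracted from the regular variation of $(X_{n})$ together with the mixing rather than simply assumed.
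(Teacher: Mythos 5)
Your overall strategy coincides with the paper's: introduce a gap of length $l_{n}$ at the end of each block, decouple the gapped blocks with the covariance inequality for $\alpha$--mixing (giving the $4k_{n}\alpha(l_{n})$ bound), and then show that the truncation error is controlled by $k_{n}\,\mathrm{E}\bigl|f(a_{n}^{-1}S_{r_{n}})-f(a_{n}^{-1}S_{r_{n}-l_{n}})\bigr|$, which is exactly the paper's decomposition into $I_{1}(n)+I_{2}(n)+I_{3}(n)$. The one genuine gap is the step you yourself flag: the ``single--big--jump'' estimate $\mathrm{P}(|S_{l_{n}}|>\eta a_{n})\le C\,l_{n}\,\mathrm{P}(|X_{1}|>\eta a_{n}/2)$. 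For a strictly stationary sequence that is merely strongly mixing and regularly varying, no such bound is available from the hypotheses: large--deviation estimates of this type for dependent sums are precisely the nontrivial content of results such as those of Bartkiewicz et al.\ and Davis--Hsing, they require conditions beyond strong mixing (anti-clustering, or relation (\ref{e:reg-var-new}) \emph{for the block length in question}), and relation (\ref{e:reg-var-new}) is assumed only for the sequence $r_{n}$, not for $l_{n}$. As written, $(\star)$ is therefore not established.

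The repair is cheap, and is what the paper does: replace the single--big--jump bound by the deterministic union bound $\mathrm{P}(|S_{l_{n}}|>\eta a_{n})\le l_{n}\,\mathrm{P}(|X_{1}|>\eta a_{n}/l_{n})$, valid with no dependence assumptions since $|S_{l_{n}}|>\eta a_{n}$ forces some $|X_{i}|>\eta a_{n}/l_{n}$. The price is the worse threshold $\eta a_{n}/l_{n}$, so one must check $k_{n}l_{n}\mathrm{P}(|X_{1}|>\eta a_{n}/l_{n})\to 0$ via a Potter-type estimate; the paper's polynomial choice $l_{n}\sim n^{q}$ with $q<\min\{1/\alpha,(1-t)/(1+\alpha)\}/2$ is tuned exactly for this. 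Your logarithmic $l_{n}=\lceil\beta\log n\rceil$ in fact survives this repair as well (then $k_{n}l_{n}\mathrm{P}(|X_{1}|>\eta a_{n}/l_{n})\lesssim n^{t}(\log n)^{1+\alpha}n^{-1+\varepsilon}\to 0$), so your choice of gap is fine --- it is only the probabilistic estimate on the discarded piece that must be swapped out. A minor further point: uniform continuity of $f$ must be taken with respect to the metric $\rho$ of (\ref{e:metricrho}) on $\mathbb{E}$, not the Euclidean metric; on the event where both sums exceed $c\,a_{n}$ in modulus and differ by less than $\eta a_{n}$ with $\eta<2c$ they share a sign and $\rho$--closeness follows from Euclidean closeness via (\ref{e:metric1}), so this is a presentational rather than substantive omission, but it should be said.
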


\begin{proof}
Let $(l_{n})$ be an arbitrary (and fixed) sequence of positive real numbers such that $l_{n} \sim n^{q}$, i.e. $l_{n} / n^{q} \to 1$ as $n \to \infty$, where $q=  \min \{ 1/\alpha, (1-t) / (1+\alpha) \} /2$.
Let $n$ be large enough such that $l_{n} < r_{n}$ (note that for
large $n$ it holds that $l_{n} < n^{1-t} < r_{n}$). We break $X_{1},
X_{2}, \ldots$ into blocks of $r_{n}$ consecutive random variables.
The last $l_{n}$ variables in each block will be dropped. Then we
shall show that doing so, the new blocks will be almost independent
(as $n \to \infty$) and this will imply relation
(\ref{e:mixcon-new}) for the new blocks. The error which occurs by
cutting of the ends of the original blocks will be small, and this
will imply condition (\ref{e:mixcon-new}) for the original blocks
also.

Take an arbitrary $f \in C_{K}^{+}(\mathbb{E})$. Since its support
is bounded away from $0$, there exists some $r>0$ such that $f(x) =
0$ for $ |x| \leq r$, and since $f$ is bounded, there exists
some $M>0$ such that $|f(x)| < M$ for all $x \in \mathbb{E}$. For
all $k,n \in \mathbb{N}$ define
\begin{equation*}
 S_{r_{n},\,l_{n}}^{k}= X_{kr_{n}-l_{n}+1}+ \ldots + X_{kr_{n}}.
\end{equation*}
$S_{r_{n},\,l_{n}}^{k}$ is the sum of the last $l_{n}$ random
variables in the $k$-th block. By stationarity we have
\begin{equation*}
 S_{r_{n}}^{k} - S_{r_{n},\,l_{n}}^{k} \eqd S_{r_{n}}^{1} - S_{r_{n},\,l_{n}}^{1}
   = S_{r_{n}-l_{n}}.
\end{equation*}
 This and the following inequality
\begin{equation*}
 |\mathrm{E} gh - \mathrm{E} g \mathrm{E} h| \leq 4 C_{1} C_{2} \alpha_{m},
\end{equation*}
for a $\mathcal{F}_{-\infty}^{j}$ measurable function $g$ and a
$\mathcal{F}_{j+m}^{\infty}$ measurable function $h$ such that $|g|
\leq C_{1}$ and $|h| \leq C_{2}$ (see Lemma 1.2.1 in Lin
and Lu \cite{LiLu97}), applied $k_{n}$ times, give
\begin{eqnarray}\label{e:mixineq}
  \nonumber \bigg| \mathrm{E} \exp \bigg( - \sum_{k=1}^{k_{n}}
   f(a_{n}^{-1}S_{r_{n}}^{k} - a_{n}^{-1}S_{r_{n},\,l_{n}}^{k}) \bigg) -
   \bigg( \mathrm{E} \exp (-f(a_{n}^{-1}S_{r_{n}-l_{n}}))
   \bigg)^{k_{n}} \bigg| & & \\[1.2em]
   & \hspace*{-56em} \leq & \hspace*{-28em} 4 k_{n} \alpha_{l_{n}+1}.
\end{eqnarray}
 Then
\begin{eqnarray}\label{e:ineqI}
  \nonumber \bigg| \mathrm{E} \exp \bigg( - \sum_{k=1}^{k_{n}}
              f(a_{n}^{-1}S_{r_{n}}^{k}) \bigg) - \bigg( \mathrm{E} \exp (-f(a_{n}^{-1}S_{r_{n}}))
              \bigg)^{k_{n}} \bigg| & & \\[0.3em]
  \nonumber  & \hspace*{-46em} \leq & \hspace*{-22em} \bigg| \mathrm{E} \exp \bigg( - \sum_{k=1}^{k_{n}}
   f(a_{n}^{-1}S_{r_{n}}^{k}) \bigg) -
   \mathrm{E} \exp \bigg (- \sum_{k=1}^{k_{n}} f(a_{n}^{-1}S_{r_{n}}^{k} - a_{n}^{-1}S_{r_{n},\,l_{n}}^{k}) \bigg)
   \bigg| \\[0.3em]
   \nonumber & & \hspace*{-22em} + \ \bigg| \mathrm{E} \exp \bigg( - \sum_{k=1}^{k_{n}}
   f(a_{n}^{-1}S_{r_{n}}^{k} - a_{n}^{-1}S_{r_{n},\,l_{n}}^{k}) \bigg) -
   \bigg( \mathrm{E} \exp (-f(a_{n}^{-1}S_{r_{n}-l_{n}}))
   \bigg)^{k_{n}} \bigg| \\[0.3em]
   \nonumber & & \hspace*{-22em} + \ \bigg| \bigg( \mathrm{E} \exp (-f(a_{n}^{-1}S_{r_{n}-l_{n}})) \bigg)^{k_{n}} -
   \bigg( \mathrm{E} \exp (-f(a_{n}^{-1}S_{r_{n}}))
   \bigg)^{k_{n}} \bigg|\\[0.9em]
   & \hspace*{-46em} =: & \hspace*{-22em} I_{1}(n) + I_{2}(n) + I_{3}(n).
\end{eqnarray}
 By Lemma 4.3 in Durrett \cite{Du96} and stationarity we have
 \begin{eqnarray*}
    \nonumber I_{1}(n) & \leq & \mathrm{E} \bigg( \sum_{k=1}^{k_{n}} |e^{-f(a_{n}^{-1}S_{r_{n}}^{k})} -
          e^{-f(a_{n}^{-1}S_{r_{n}}^{k} - a_{n}^{-1}S_{r_{n},\,l_{n}}^{k})}| \bigg)\\[0.5em]
  \nonumber & = & k_{n} \mathrm{E} \big|e^{-f(a_{n}^{-1}S_{r_{n}})} -
          e^{-f(a_{n}^{-1}S_{r_{n}-l_{n}})} \big|\\[0.5em]
  \nonumber & = & k_{n} \mathrm{E} \big| e^{-f(a_{n}^{-1}S_{r_{n}})}
         (1- e^{f(a_{n}^{-1}S_{r_{n}}) -f(a_{n}^{-1}S_{r_{n}-l_{n}})}) \big|  \\[0.5em]
  \nonumber & \leq & k_{n} \mathrm{E} \big|
         1- e^{f(a_{n}^{-1}S_{r_{n}}) -f(a_{n}^{-1}S_{r_{n}-l_{n}})} \big|.
\end{eqnarray*}
It can be shown that for any $t>0$ there exists a constant
$C=C(t)>0$ such that
\begin{equation*}
 |1-e^{-x}| \leq C |x|, \qquad \textrm{for all} \ |x| < t.
\end{equation*}
Since for all $x,y \in \mathbb{E}$, $ |f(x)-f(y)|<2M$, there exists
a positive constant $C$ such that
\begin{equation}\label{e:ineqI1}
    I_{1}(n) \leq Ck_{n} \mathrm{E} | f(a_{n}^{-1}S_{r_{n}}) - f(a_{n}^{-1}S_{r_{n}-l_{n}}) |.
\end{equation}
 Further,
\begin{eqnarray}\label{e:pomineqI1}
   \nonumber  \mathrm{E} | f(a_{n}^{-1}S_{r_{n}}) -
         f(a_{n}^{-1}S_{r_{n}-l_{n}})| & & \\[0.6em]
   \nonumber & \hspace*{-20em} = & \hspace*{-10em} \mathrm{E} \big[ |f(a_{n}^{-1}S_{r_{n}}) - f(a_{n}^{-1}S_{r_{n}-l_{n}})|
      1_{\{ a_{n}^{-1}|S_{r_{n}-l_{n}}| > r/2 \}}1_{ \{ a_{n}^{-1}|S_{r_{n}}|>r/4
      \}} \big]\\[0.6em]
   \nonumber & & \hspace*{-10em} + \ \mathrm{E} \big[ f(a_{n}^{-1}S_{r_{n}-l_{n}})
      1_{\{ a_{n}^{-1}|S_{r_{n}-l_{n}}| > r/2 \}}1_{ \{ a_{n}^{-1}|S_{r_{n}}| \leq r/4
      \}} \big]\\[0.6em]
   \nonumber & & \hspace*{-10em} + \ \mathrm{E} \big[ f(a_{n}^{-1}S_{r_{n}})1_{\{ a_{n}^{-1}|S_{r_{n}-l_{n}}| \leq r/2 \}}
        1_{ \{ a_{n}^{-1}|S_{r_{n}}|>r \}} \big]\\[0.6em]
   \nonumber & \hspace*{-20em} \leq & \hspace*{-10em} \mathrm{E} \big[
      |f(a_{n}^{-1}S_{r_{n}}) - f(a_{n}^{-1}S_{r_{n}-l_{n}})|
      1_{\{ a_{n}^{-1}|S_{r_{n}-l_{n}}| > r/2 \}}1_{ \{ a_{n}^{-1}|S_{r_{n}}|>r/4
      \}} \big]\\[0.6em]
     & & \hspace*{-10em} + \ M \mathrm{P} \bigg( \frac{|S_{l_{n}}|}{a_{n}} > \frac{r}{4} \bigg)
         + M \mathrm{P} \bigg( \frac{|S_{l_{n}}|}{a_{n}} > \frac{r}{2}
         \bigg).
\end{eqnarray}
Since the set $S= \{ x \in \mathbb{E} : |x| > r/4 \}$ is relatively
compact and any continuous function on a compact set is uniformly
continuous, it follows that for any $\epsilon >0$ there exists
$\delta >0$ such that $ |f(x) - f(y)| < \epsilon$ for all $x,y \in
S$ such that $\rho (x,y) \leq \delta$, where $\rho$ is the
metric on $\mathbb{E}$ defined in (\ref{e:metricrho}). If
$|x|>r/2$, $|y|>r/4$ and $\textrm{sign}(x)=\textrm{sign}(y)$, then
$x,y \in S$ and
\begin{equation}\label{e:metric1}
    \rho (x,y) = \frac{||x|-|y||}{|xy|} \leq \frac{8}{r^{2}}|x-y|.
\end{equation}
Define
\begin{equation*}
g_{n}(x,y)=|f(a_{n}^{-1}x)-f(a_{n}^{-1}y)|.
\end{equation*}
Let $\epsilon >0$ be arbitrary. Then
\begin{eqnarray*}
   \mathrm{E} \big[ |f(a_{n}^{-1}S_{r_{n}}) - f(a_{n}^{-1}S_{r_{n}-l_{n}})|
      1_{\{ a_{n}^{-1}|S_{r_{n}-l_{n}}| > r/2 \}}1_{ \{ a_{n}^{-1}|S_{r_{n}}|>r/4 \}} \big]
   & &  \\[0.5em]
   & \hspace*{-50em} = & \hspace*{-25em} \mathrm{E}
     \big[ g_{n}(S_{r_{n}}, S_{r_{n}-l_{n}})
      1_{\{ a_{n}^{-1}|S_{r_{n}-l_{n}}| > r/2,\,a_{n}^{-1}|S_{r_{n}}|>r/4 \}}
      1_{ \{ \textrm{sign}(S_{r_{n}-l_{n}})\,\neq\,\textrm{sign} (S_{r_{n}}) \} } \big]\\[0.5em]
   & \hspace*{-50em}  & \hspace*{-25em} + \ \mathrm{E} \big[ g_{n}(S_{r_{n}}, S_{r_{n}-l_{n}})
      1_{\{ a_{n}^{-1}S_{r_{n}-l_{n}} > r/2,\,a_{n}^{-1}S_{r_{n}}>r/4 \}}
      1_{ \{ a_{n}^{-1}|S_{r_{n}} - S_{r_{n}-l_{n}}| \leq \delta r^{2}/8 \} } \big]\\[0.5em]
   & \hspace*{-50em}  & \hspace*{-25em} +  \ \mathrm{E} \big[ g_{n}(S_{r_{n}}, S_{r_{n}-l_{n}})
      1_{\{ a_{n}^{-1}S_{r_{n}-l_{n}} < -r/2,\,a_{n}^{-1}S_{r_{n}} < -r/4 \}}
      1_{ \{ a_{n}^{-1}|S_{r_{n}} - S_{r_{n}-l_{n}}| \leq \delta r^{2}/8 \} } \big]\\[0.5em]
   & \hspace*{-50em}  & \hspace*{-25em} + \ \mathrm{E} \big[ g_{n}(S_{r_{n}}, S_{r_{n}-l_{n}})
      1_{\{ a_{n}^{-1}S_{r_{n}-l_{n}} > r/2,\,a_{n}^{-1}S_{r_{n}}>r/4 \}}
      1_{ \{ a_{n}^{-1}|S_{r_{n}} - S_{r_{n}-l_{n}}| > \delta r^{2}/8 \} } \big]\\[0.5em]
   & \hspace*{-50em}  & \hspace*{-25em} + \  \mathrm{E} \big[ g_{n}(S_{r_{n}}, S_{r_{n}-l_{n}})
      1_{\{ a_{n}^{-1}S_{r_{n}-l_{n}} < -r/2,\,a_{n}^{-1}S_{r_{n}} < -r/4 \}}
      1_{ \{ a_{n}^{-1}|S_{r_{n}} - S_{r_{n}-l_{n}}| > \delta r^{2}/8 \} } \big].
\end{eqnarray*}
By stationarity and relation (\ref{e:metric1}) this is bounded above
by
\begin{eqnarray*}
    &  \leq &  2M \mathrm{P} \bigg( \frac{|S_{r_{n}}-S_{r_{n}-l_{n}}|}{a_{n}}
      > \frac{3r}{4} \bigg)\\[0.6em]
   &  &  + \ \mathrm{E} \big[ g_{n}(S_{r_{n}}, S_{r_{n}-l_{n}})
      1_{\{ a_{n}^{-1}S_{r_{n}-l_{n}} > r/2 \}}1_{ \{ a_{n}^{-1}S_{r_{n}}>r/4 \}}
      1_{ \{ \rho(a_{n}^{-1}S_{r_{n}},\,a_{n}^{-1}S_{r_{n}-l_{n}}) \leq \delta  \} }
      \big]\\[0.6em]
    &  &  + \ \mathrm{E} \big[ g_{n}(S_{r_{n}}, S_{r_{n}-l_{n}})
      1_{\{ a_{n}^{-1}S_{r_{n}-l_{n}} < -r/2 \}}1_{ \{ a_{n}^{-1}S_{r_{n}}< -r/4 \}}
      1_{ \{ \rho(a_{n}^{-1}S_{r_{n}},\,a_{n}^{-1}S_{r_{n}-l_{n}}) \leq \delta  \} } \big]\\[0.6em]
   &  &  + \ 4M
      \mathrm{P} \bigg( \frac{|S_{r_{n}} - S_{r_{n}-l_{n}}|}{a_{n}} > \frac{\delta r^{2}}{8} \bigg)\\[0.6em]
   &  \leq &  2M \mathrm{P} \bigg(
       \frac{|S_{l_{n}}|}{a_{n}} > \frac{3r}{4} \bigg) + \epsilon
       \mathrm{P} \bigg( \frac{|S_{r_{n}}|}{a_{n}} > \frac{r}{4} \bigg) + 4M \mathrm{P} \bigg( \frac{|S_{l_{n}}|}{a_{n}} >
       \frac{\delta r^{2}}{8} \bigg).
\end{eqnarray*}
Therefore, from (\ref{e:ineqI1}) and (\ref{e:pomineqI1}) we obtain
\begin{equation}\label{e:ineqI1-0}
I_{1}(n) \leq 8 MCk_{n} \mathrm{P} \bigg(
\frac{|S_{l_{n}}|}{a_{n}} > \gamma \bigg) + \epsilon Ck_{n}
      \mathrm{P} \bigg( \frac{|S_{r_{n}}|}{a_{n}} > \frac{r}{4} \bigg),
\end{equation}
 where $\gamma = \min \{ r/4, \delta r^{2}/8 \}>0$.

Recall that, since $X_{1}$ is regularly varying with index $\alpha \in (0,2)$ it holds that
$\mathrm{P}(|X_{1}|>x) = x^{-\alpha} L(x)$ for any $x>0$,
where $L(\,\cdot\,)$ is a slowly varying function.  It also holds that
 $ a_{n} = n^{1/\alpha} L'(n)$,
 where $L'(\,\cdot\,)$ is a slowly varying function. Hence taking an arbitrary $0 < s < \min \{
\alpha,\,\alpha (1-t-q - \alpha q) /(1-\alpha q) \}$, we have
\begin{eqnarray*}
  k_{n} \mathrm{P} \bigg( \frac{|S_{l_{n}}|}{a_{n}} > \gamma \bigg)
   & \leq & k_{n} l_{n} \mathrm{P} (|X_{1}| > \gamma a_{n} / l_{n}) = k_{n}l_{n} \bigg(\frac{\gamma
   a_{n}}{l_{n}} \bigg)^{-\alpha} L \bigg( \frac{\gamma
   a_{n}}{l_{n}} \bigg) \\
   &=& k_{n}l_{n} \bigg(\frac{\gamma
   a_{n}}{l_{n}}\bigg)^{s - \alpha} \cdot c_{n},
\end{eqnarray*}
 where
\begin{equation*}
 c_{n} = \bigg( \frac{\gamma a_{n}}{l_{n}} \bigg)^{-s} L \bigg(
    \frac{\gamma a_{n}}{l_{n}} \bigg).
\end{equation*}
  Since $a_{n} / l_{n} \to \infty$
as $n \to \infty$, by Proposition 1.3.6 in Bingham et al.
\cite{BiGoTe89} we have that
 $c_{n} \to 0$ as $n \to \infty$. Further
\begin{eqnarray*}
   k_{n}l_{n} \bigg(\frac{\gamma a_{n}}{l_{n}}\bigg)^{s - \alpha} & = &
       \frac{k_{n}(l_{n})^{1+\alpha -s}}{\gamma^{\alpha -s}a_{n}^{\alpha
       -s}} = \bigg(\frac{l_{n}}{n^{q}}\bigg)^{1+\alpha -s} \cdot \frac{k_{n}}{n^{t}} \cdot
       \frac{n^{t}(n^{q})^{1+\alpha -s}}{\gamma^{\alpha -s}n^{(\alpha
       -s)/\alpha}(L'(n))^{\alpha-s}}\\[0.4em]
   & \leq & \bigg(\frac{l_{n}}{n^{q}}\bigg)^{1+\alpha -s} \cdot
       \frac{k_{n}}{n^{t}} \cdot \frac{1}{\gamma^{\alpha -s}n^{p}(L'(n))^{\alpha-s}}
\end{eqnarray*}
where
 $p = (\alpha -s) /\alpha - t - (1+\alpha-s)q$. It can easily be
checked that $p>0$. This and the fact that $l_{n} \sim n^{q}$ and
$k_{n}=o(n^{t})$, by Proposition 1.3.6 in Bingham et al.
\cite{BiGoTe89}, imply that $k_{n}l_{n} (\gamma a_{n} / l_{n})^{s -
\alpha}
   \to 0$ as $n \to \infty$. Hence
\begin{equation}\label{e:ineqI1-1}
    k_{n} \mathrm{P} \bigg( \frac{|S_{l_{n}}|}{a_{n}} > \gamma \bigg) \to
    0
    \quad \textrm{as} \ n \to \infty.
\end{equation}
From relation (\ref{e:reg-var-new}) we obtain that, as $n \to
\infty$,
 \begin{equation}\label{e:ineqI1-2}
    k_{n} \mathrm{P} \bigg( \frac{|S_{r_{n}}|}{a_{n}} > \frac{r}{4} \bigg)
   \to  \nu ( \{x \in \mathbb{E} : |x| > r/4 \}) =:A < \infty.
 \end{equation}
 Thus from relations (\ref{e:ineqI1-0}),
(\ref{e:ineqI1-1}) and (\ref{e:ineqI1-2}) we obtain
\begin{equation*}
 \limsup_{n \to \infty} I_{1}(n) \leq AC \epsilon,
\end{equation*}
and since $\epsilon >0$ is arbitrary, we have
\begin{equation}\label{e:1ineqI1}
    \lim_{n \to \infty} I_{1}(n)=0.
\end{equation}
From the assumption that $(X_{n})$ is strongly mixing with exponential rate it follows that $k_{n}
\alpha_{l_{n}+1} \to 0$ as $n \to \infty$, and hence from (\ref{e:mixineq}) we obtain
\begin{equation}\label{e:1ineqI2}
    \lim_{n \to \infty} I_{2}(n)=0.
\end{equation}
Using again Lemma 4.3 in Durrett \cite{Du96} it follows
\begin{equation*}
  I_{3}(n) \leq  k_{n} \mathrm{E} \big| e^{-f(a_{n}^{-1}S_{r_{n}})} -
          e^{-f(a_{n}^{-1}S_{r_{n}-l_{n}})} \big|.
\end{equation*}
Repeating the same procedure as for $I_{1}(n)$ we get
\begin{equation}\label{e:1ineqI3}
    \lim_{n \to \infty} I_{3}(n)=0.
\end{equation}
Taking into account relations (\ref{e:1ineqI1}), (\ref{e:1ineqI2})
and (\ref{e:1ineqI3}), from (\ref{e:ineqI}) we obtain that, as $n
\to \infty$,
\begin{equation*}
 \mathrm{E} \exp \bigg( - \sum_{k=1}^{k_{n}}
              f(a_{n}^{-1}S_{r_{n}}^{k}) \bigg) - \bigg( \mathrm{E} \exp (-f(a_{n}^{-1}S_{r_{n}}))
              \bigg)^{k_{n}} \to 0,
\end{equation*}
 and this concludes the proof.
\end{proof}


\begin{prop}\label{p:rhomix}
Suppose $(X_{n})$ is a strictly stationary sequence of
regularly varying random variables with index of regular variation
$\alpha \in (1,2)$, and $(a_{n})$ a sequence of positive real
numbers such that $n \mathrm{P}(|X_{1}|>a_{n}) \to 1$ as $n \to
\infty$. Let $(r_{n})$ be sequence of positive
integers such that $r_{n} \to \infty $ as $n \to \infty$. If $r_{n} = o(n^{s})$ for some $0< s < 2/\alpha -1$, and the sequence $(\rho_{n})$ of $\rho$--mixing coefficients
of
 $(X_{n})$ decreases to zero as $n \to \infty$ and
 \begin{equation}\label{e:new-series-rho}
    \sum_{j \geq 0} \rho_{\lfloor 2^{j/3} \rfloor} < \infty,
 \end{equation}
 then Condition \ref{c:step6cond-new} holds.
\end{prop}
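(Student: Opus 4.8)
The plan is to verify Condition~\ref{c:step6cond-new} by a second-moment (variance) argument combined with a maximal inequality for $\rho$-mixing sequences. For fixed $u>0$ and $n\in\mathbb{N}$, write
\[
 Y_{n,k}=\frac{S_{r_{n}}^{k}}{a_{n}}\,1_{\{|S_{r_{n}}^{k}|\le u a_{n}\}}
 -\mathrm{E}\Big(\frac{S_{r_{n}}^{k}}{a_{n}}\,1_{\{|S_{r_{n}}^{k}|\le u a_{n}\}}\Big),
 \qquad k=1,\dots,k_{n},
\]
so that $(Y_{n,k})_{k}$ is, for each $n$, a strictly stationary, centered and bounded (by $2u$) array, and the expression in Condition~\ref{c:step6cond-new} is exactly $\mathrm{P}[\max_{1\le j\le k_{n}}|\sum_{k=1}^{j}Y_{n,k}|>\delta]$. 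The goal is to show that its $\lim_{u\downarrow0}\limsup_{n\to\infty}$ vanishes for every $\delta>0$.

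First I would invoke a Rademacher--Menshov type maximal moment inequality for $\rho$-mixing sequences (as available in Lin and Lu~\cite{LiLu97}) to replace the maximal probability by a single second moment. Since $Y_{n,k}$ depends only on the $k$-th block $X_{(k-1)r_{n}+1},\dots,X_{kr_{n}}$, the array $(Y_{n,k})_{k}$ is $\rho$-mixing with coefficients dominated at lag $d$ by $\rho_{(d-1)r_{n}+1}$, and the summability~(\ref{e:new-series-rho}) is precisely what keeps the constant in the maximal inequality bounded uniformly in $n$ (the dyadic scale $2^{j/3}$ being dictated by matching the blocked mixing rate against the joint growth of $k_{n}$ and $r_{n}$). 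Chebyshev's inequality then yields
\[
 \mathrm{P}\Big[\max_{1\le j\le k_{n}}\Big|\sum_{k=1}^{j}Y_{n,k}\Big|>\delta\Big]
 \le \delta^{-2}\,\mathrm{E}\max_{1\le j\le k_{n}}\Big|\sum_{k=1}^{j}Y_{n,k}\Big|^{2}
 \le C\,\delta^{-2}\,k_{n}\,\mathrm{E}\,Y_{n,1}^{2},
\]
with $C$ independent of $n$ and $u$.

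The core of the argument is then the second-moment estimate
\[
 \limsup_{n\to\infty}k_{n}\,\mathrm{E}\,Y_{n,1}^{2}
 \le \limsup_{n\to\infty}k_{n}\,\mathrm{E}\Big(\frac{S_{r_{n}}^{2}}{a_{n}^{2}}\,1_{\{|S_{r_{n}}|\le u a_{n}\}}\Big)
 \le C'\,u^{2-\alpha}.
\]
This is the analogue of Lemma~\ref{l:auxlemma1} with $|x|$ replaced by $x^{2}$ and $\alpha\in(1,2)$: by the vague convergence~(\ref{e:reg-var-new}) the contribution of $\{\delta<|x|\le u\}$ converges to $\int_{\delta<|x|\le u}x^{2}\,\nu(dx)$, and $\int_{|x|\le u}x^{2}\,\nu(dx)=(c_{-}+c_{+})\frac{\alpha}{2-\alpha}u^{2-\alpha}$ is finite precisely because $\alpha<2$. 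The delicate piece is the near-origin term $k_{n}a_{n}^{-2}\mathrm{E}(S_{r_{n}}^{2}1_{\{|S_{r_{n}}|\le\delta a_{n}\}})$, which must be shown to be negligible as $\delta\downarrow0$ uniformly in $n$. Here I would pass from the block-sum truncation to the individually truncated variables $X_{j}1_{\{|X_{j}|\le\delta a_{n}\}}$, use the $\rho$-mixing variance bound $\mathrm{Var}(\sum_{j=1}^{r_{n}}\cdot)\le C r_{n}\,\mathrm{Var}(\cdot)$ (again licensed by~(\ref{e:new-series-rho})) to avoid the lossy quadratic-in-$r_{n}$ Cauchy--Schwarz bound, and Karamata's theorem to evaluate $\mathrm{E}(X_{1}^{2}1_{\{|X_{1}|\le\delta a_{n}\}})\sim\frac{\alpha}{2-\alpha}(\delta a_{n})^{2}\mathrm{P}(|X_{1}|>\delta a_{n})$; combined with $k_{n}r_{n}/n\to1$ and $n\mathrm{P}(|X_{1}|>\delta a_{n})\to\delta^{-\alpha}$ this produces a bound of the correct order $\delta^{2-\alpha}$. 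The growth restriction $r_{n}=o(n^{s})$ with $s<2/\alpha-1$ is exactly what is needed to control the discrepancy between the block-level and individual-level truncations (a second-order term whose size is governed by $r_{n}/n^{1/\alpha}$).

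Combining the two steps and letting first $n\to\infty$ and then $u\downarrow0$ gives $\lim_{u\downarrow0}\limsup_{n}\mathrm{P}[\cdots]\le C\delta^{-2}\lim_{u\downarrow0}C'u^{2-\alpha}=0$, since $2-\alpha>0$. I expect the main obstacle to be the second-moment step, and within it the near-origin estimate: because the truncation acts on the whole block sum $S_{r_{n}}$ rather than on the individual $X_{j}$, reconciling it with the $\rho$-mixing variance inequality (which naturally applies to sums of individually truncated terms) is the genuinely technical point, and it is there that the sharper series condition~(\ref{e:new-series-rho}) and the growth bound on $r_{n}$ are really used.
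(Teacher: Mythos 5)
Your proposal follows essentially the same route as the paper: a Peligrad-type maximal inequality for $\rho$--mixing sequences (with the blocked coefficients dominated by $\rho_{(d-1)r_{n}+1}\le\rho_{d}$ and the series condition keeping the constant uniform), followed by the bound $\limsup_{n}k_{n}\mathrm{E}Y_{n,1}^{2}\lesssim u^{2-\alpha}$ obtained by passing to individually truncated $X_{j}$'s, applying Peligrad's variance inequality and Karamata's theorem, with $r_{n}=o(n^{s})$, $s<2/\alpha-1$, used exactly to kill the uncentered mean term of order $r_{n}/a_{n}$. The only deviation is your extra annulus/near-origin split via vague convergence, which is harmless but redundant: the paper runs the individual-truncation estimate directly at level $u$ and gets the $u^{2-\alpha}$ bound in one step.
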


\begin{proof}
 Let $n \in \mathbb{N}$ and $u >0$ be arbitrary.
Define
\begin{equation*}
 Z_{k} = Z_{k}(u, n) = \frac{S_{r_{n}}^{k}}{a_{n}} 1_{ \big\{ \frac{|S_{r_{n}}^{k}|}{a_{n}}
   \leq u  \big\}} - \mathrm{E} \Big( \frac{S_{r_{n}}^{k}}{a_{n}}
    1_{ \big\{ \frac{|S_{r_{n}}^{k}|}{a_{n}} \leq u \big\} }
    \Big), \qquad k \in \mathbb{N}.
 \end{equation*}
Take an arbitrary $\delta >0$ and as in the proof of Theorem
\ref{t:newprocess} define
\begin{equation*}
 I(u,n) =
    \mathrm{P} \bigg[ \max_{1 \leq j \leq k_{n}} \bigg| \sum_{k=1}^{j} Z_{k} \Big| > \delta \Big].
\end{equation*}
Corollary 2.1 in Peligrad \cite{Pe99} then implies
\begin{equation*}
 I(u,n) \leq \delta^{-2} C  \exp \Big( 8 \sum_{j=0}^{\lfloor \log_{2} k_{n}
\rfloor} \widetilde{\rho}_{\lfloor 2^{j/3} \rfloor} \Big)\,k_{n}
\mathrm{E} (Z_{1}^{2}),
\end{equation*}
where $(\widetilde{\rho}_{k})$ is the sequence of
$\rho$-mixing coefficients of $(Z_{k})$ and $C$ is some positive
constant (here we put $\log_{2}0:=0$).
 Now standard calculations show that for any $k \in
\mathbb{N}$,
\begin{equation*}
 \widetilde{\rho}_{k} \leq
\rho_{(k-1)r_{n}+1},
\end{equation*}
and since the sequence $(\rho_{k})$ is
non-increasing, we have
 $ \widetilde{\rho}_{k} \leq \rho_{k}$. From this and
 assumption (\ref{e:new-series-rho}) we obtain that
 \begin{equation}\label{e:rho-new1}
    I(u,n) \leq CL \delta^{-2}\,k_{n}
     \mathrm{E} (Z_{1}^{2}),
 \end{equation}
 for some positive constant $L$.
Further we have
\begin{eqnarray}\label{e:rhomixcond2}
  \nonumber \mathrm{E} (Z_{1}^{2}) & \leq &   \mathrm{E} \Big( \frac{|S_{r_{n}}|^{2}}{a_{n}^{2}}
      1_{ \big\{ \frac{|S_{r_{n}}|}{a_{n}} \leq u  \big\}} \Big) =
      \mathrm{E} \Big( \frac{|S_{r_{n}}|^{2}}{a_{n}^{2}}
      1_{ \big\{ \frac{|S_{r_{n}}|}{a_{n}} \leq u  \big\}}
      1_{ \{ \cap_{i=1}^{r_{n}} \{ |X_{i}| \leq u a_{n} \} \} }  \Big) \\[0.7em]
  \nonumber &  & + \ \mathrm{E} \Big( \frac{|S_{r_{n}}|^{2}}{a_{n}^{2}}
      1_{ \big\{ \frac{|S_{r_{n}}|}{a_{n}} \leq u  \big\}}
      1_{ \{ \cup_{i=1}^{r_{n}} \{ |X_{i}| > u a_{n} \} \} }  \Big) \\[0.7em]
     & \leq &  \mathrm{E} \Big( \Big| \sum_{i=1}^{r_{n}}
       \frac{X_{i}}{a_{n}} 1_{\big\{ \frac{|X_{i}|}{a_{n}} \leq u \big\} } \Big|^{2} \Big) +
        u^{2} \mathrm{P} \Big( \bigcup_{i=1}^{r_{n}} \{ |X_{i}| > u a_{n} \}
        \Big).
\end{eqnarray}
Note that
\begin{eqnarray}\label{e:peligradrastav}
  \nonumber \mathrm{E} \Big( \Big| \sum_{i=1}^{r_{n}} \frac{X_{i}}{a_{n}} 1_{ \big\{ \frac{|X_{i}|}{a_{n}} \leq u \big\} }
    \Big|^{2} \Big) & &\\[0.6em]
     \nonumber & \hspace*{-14em} = & \hspace*{-7em} \mathrm{E} \Big( \Big| \sum_{i=1}^{r_{n}} \frac{X_{i} 1_{ \{ |X_{i}| \leq u a_{n} \}} - \mathrm{E}(X_{i} 1_{ \{ |X_{i}| \leq u a_{n} \}} )}{a_{n}}
      + \sum_{i=1}^{r_{n}} \mathrm{E} \Big( \frac{X_{i}}{a_{n}} 1_{ \big\{ \frac{|X_{i}|}{a_{n}} \leq u \big\} } \Big) \Big|^{2} \Big)\\[0.5em]
   &  \hspace*{-14em} = & \hspace*{-7em} \mathrm{E}(I_{1}^{2}) + 2 \mathrm{E}(I_{1}) I_{2} + I_{2}^{2},
\end{eqnarray}
where
\begin{equation*}
 I_{1}= \sum_{i=1}^{r_{n}} \frac{X_{i} 1_{ \{ |X_{i}| \leq u a_{n} \}} - \mathrm{E}(X_{i} 1_{ \{ |X_{i}| \leq u a_{n} \}} )}{a_{n}} \quad \textrm{and} \quad I_{2}= \sum_{i=1}^{r_{n}} \mathrm{E} \Big( \frac{X_{i}}{a_{n}} 1_{ \big\{ \frac{|X_{i}|}{a_{n}} \leq u \big\} } \Big).
\end{equation*}
Since $I_{1}$ is a sum of centered random variables, by Theorem 2.1 in
Peligrad \cite{Pe99} we have
\begin{equation}\label{e:tmpeligrad1}
    \mathrm{E} (I_{1}^{2}) \leq C \exp \Big( 8 \sum_{j=0}^{\lfloor \log_{2} r_{n} \rfloor}
    \rho_{ \lfloor 2^{j/3} \rfloor} (n,u) \Big)\,r_{n} \mathrm{E} \Big( \frac{X_{1}^{2}}{a_{n}^{2}}
    1_{ \big\{ \frac{|X_{1}|}{a_{n}} \leq u \big\} } \Big),
\end{equation}
for all $n \in \mathbb{N}$, where $(\rho_{j}(n, u))_{j}$ is the
sequence of $\rho$-mixing coefficients of
 $\displaystyle \Big( \frac{X_{j}}{a_{n}} 1_{\big\{ \frac{|X_{j}|}{a_{n}} \leq
  u \big\} } - \mathrm{E} \Big( \frac{X_{j}}{a_{n}} 1_{\big\{ \frac{|X_{j}|}{a_{n}} \leq
  u \big\} } \Big) \Big)_{j}$.
  Since the function
 $f = f_{n,u} \colon \mathbb{R} \to \mathbb{R}$ defined by
\begin{equation*}
 f(x) = \frac{x}{a_{n}} 1_{ \big\{ \frac{|x|}{a_{n}} \leq u \big\} } - \mathrm{E} \Big( \frac{X_{1}}{a_{n}} 1_{\big\{ \frac{|X_{1}|}{a_{n}} \leq
  u \big\} } \Big)
\end{equation*}
 is measurable, it follows that
\begin{equation*}
 \sigma \Big( \frac{X_{j}}{a_{n}}
 1_{ \big\{ \frac{|X_{j}|}{a_{n}} \leq u \big\} } - \mathrm{E} \Big( \frac{X_{j}}{a_{n}} 1_{\big\{ \frac{|X_{j}|}{a_{n}} \leq
  u \big\} } \Big) \Big) \subseteq \sigma (X_{j})
\end{equation*}
 (see Theorem 4 in Chow and Teicher \cite{ChTe97}). From this we immediately obtain
 $ \rho_{j}(n,u) \leq \rho_{j},$
for all $j, n \in \mathbb{N}$ and $u>0$.  Thus from
(\ref{e:tmpeligrad1}), by a new application of assumption
(\ref{e:new-series-rho}), we get
\begin{equation}\label{e:tmpeligrad2-new}
  \mathrm{E} (I_{1}^{2}) \leq CL\,r_{n} \mathrm{E} \Big( \frac{X_{1}^{2}}{a_{n}^{2}}
    1_{ \big\{ \frac{|X_{1}|}{a_{n}} \leq u \big\} } \Big).
\end{equation}
Note also $\mathrm{E}(I_{1})=0$. Since $\alpha \in (1,2)$ it holds that $\mathrm{E}|X_{i}| < \infty$. Hence
\begin{equation}\label{e:peligradnovo}
    I_{2}^{2} \leq M \frac{r_{n}^{2}}{a_{n}^{2}},
\end{equation}
for some positive constant $M$.
Now relations (\ref{e:rhomixcond2}), (\ref{e:peligradrastav}), (\ref{e:tmpeligrad2-new}) and (\ref{e:peligradnovo})
imply
\begin{eqnarray*}
   \nonumber k_{n} \mathrm{E} (Z_{1}^{2}) & \leq &  C L\,k_{n} r_{n} \mathrm{E} \Big( \frac{X_{1}^{2}}{a_{n}^{2}}
    1_{ \big\{ \frac{|X_{1}|}{a_{n}} \leq u \big\} } \Big) + M \frac{k_{n} r_{n}^{2}}{a_{n}^{2}} +  u^{2} k_{n}r_{n} P(|X_{1}| > u
    a_{n})\\[0.5em]
    & = & u^{2} \cdot \frac{k_{n}r_{n}}{n} \cdot n
    \mathrm{P}(|X_{1}|>ua_{n}) \cdot \bigg[ C L \frac{ \mathrm{E}[X_{1}^{2}
      1_{\{ |X_{1}| \leq u a_{n} \}}]}{u^{2} a_{n}^{2}P(|X_{1}|>u a_{n})} + 1
      \bigg] + M \frac{k_{n} r_{n}^{2}}{a_{n}^{2}}.
\end{eqnarray*}
From this, using the regular variation property of $X_{1}$, Karamata's theorem and the fact that $k_{n}r_{n}/n \to 1$ and
\begin{equation*}
 \frac{k_{n} r_{n}^{2}}{a_{n}^{2}} = \frac{k_{n}r_{n}}{n} \cdot \frac{r_{n}}{n^{s}} \cdot \frac{1}{n^{2/\alpha -1 -s}(L'(n))^{2}} \to 0
\end{equation*}
 as $n \to
\infty$ (here we used again the representation $a_{n}= n^{1/\alpha}L'(n)$, with $L'(\,\cdot\,)$ being a slowly varying function at $\infty$)
 we obtain
 \begin{equation*}
 \limsup_{n \to \infty}\,k_{n} \mathrm{E} (Z_{1}^{2}) \leq
u^{2-\alpha} \Big( \frac{CL \alpha}{2-\alpha} +1 \Big).
 \end{equation*}
 Letting $u \downarrow 0$, it follows that $\lim_{u \downarrow 0}
 \limsup_{n \to \infty}\,k_{n} \mathrm{E} (Z_{1}^{2}) =0$.
 Therefore, from (\ref{e:rho-new1}), we get
\begin{equation*}
  \lim_{u \downarrow 0} \limsup_{n \to \infty} I(u,n)=0,
\end{equation*}
 and Condition \ref{c:step6cond-new} holds.
\end{proof}

\begin{rem}
A careful analysis of the proof of Proposition~\ref{p:rhomix} shows that the additional condition on the sequence $(r_{n})$ (namely, $r_{n} = o(n^{s})$ for some $0< s < 2/\alpha -1$) can be dropped if we assume that the random variables $X_{i}$ are symmetric, since then we can directly apply Theorem 2.1 of Peligrad~\cite{Pe99} to $ \mathrm{E} ( | \sum_{i=1}^{r_{n}} a_{n}^{-1}X_{i} 1_{ \{ |X_{i}| \leq u a_{n}\} }|^{2} )$, and hence we do not need to introduce $I_{2}$ (to which the additional condition on $r_{n}$ is related). This also holds for $\alpha =1$.
\end{rem}


\end{document}